\numberwithin{equation}{section}
\newtheorem{theorem}{\textbf{Theorem}}[section]
\newtheorem{theorem*}{\textbf{Theorem}}
\newtheorem{definition}[theorem]{\textbf{Definition}}
\newtheorem{proposition}[theorem]{\textbf{Proposition}}
\newtheorem{question}[theorem]{Question}
\newtheorem{corollary}[theorem]{\textbf{Corollary}}
\newtheorem{remark}[theorem]{\textbf{Remark}}
\newtheorem{definition/proposition}[theorem]{\textbf{Definition/Proposition}}
\def\N{{\mathbb N}}
\def\R{\mathbb{R}}
\def\Z{{\mathbb Z}}
\def\C{{\mathbb C}}
\def\D{{\mathbb D}}
\def\Q{{\mathbb Q}}
\def\cA{{\mathcal A}}
\def\cM{{\mathcal M}}
\def\cR{{\mathcal R}}
\def\rd{{\rm d}}
\def\la{\langle\,}
\def\ra{\,\rangle}
\def\std{\rm std}
\DeclareMathOperator{\Ima}{im}
\DeclareMathOperator{\ind}{ind}
\DeclareMathOperator{\Id}{id}
\DeclareMathOperator{\virdim}{virdim}
\DeclareMathOperator{\CHA}{CH}
\DeclareMathOperator{\CH}{CH}
\DeclareMathOperator{\CC}{CC}
\newcommand{\Addresses}{{
		\bigskip
		\footnotesize

	    Zhengyi Zhou, \par\nopagebreak
        \textsc{State Key Laboratory of Mathematical Sciences, Chinese Academy of Sciences;}\par\nopagebreak
	    \textsc{Morningside Center of Mathematics, Chinese Academy of Sciences;}\par\nopagebreak
         \textsc{Academy of Mathematics and Systems Science, Chinese Academy of Sciences, China}\par\nopagebreak
		\textit{E-mail address}: \href{mailto:zhyzhou@amss.ac.cn}{zhyzhou@amss.ac.cn}

}}
\title{Contact $(+1)$-surgeries and algebraic overtwistedness}
\author{Zhengyi Zhou}
\begin{document}
	\maketitle
\begin{abstract}
We show that a contact $(+1)$-surgery along a Legendrian sphere in a flexibly fillable contact manifold ($c_1=0$ if not subcritical) yields a contact manifold that is algebraically overtwisted if the Legendrian's homology class is not annihilated in the filling. Our construction can also be implemented in more general contact manifolds yielding algebraically overtwisted manifolds through $(+1)$-surgeries. This gives a new proof of the vanishing of contact homology for overtwisted contact manifolds. Our result can be viewed as the symplectic field theory analog in any dimension of the vanishing of contact Ozsv\'ath-Szab\'o invariant for $(+1)$-surgeries on two-component Legendrian links proved by Ding, Li, and Wu \cite{DLW}. 
\end{abstract}
\section{Introduction}
The fundamental dichotomy in contact topology separates manifolds into the collection of overtwisted contact manifolds, which are flexible in the sense that an $h$-principle holds by the seminal work of Eliashberg \cite{zbMATH04121041} and Borman-Eliashberg-Murphy \cite{zbMATH06567662}, and the collection of tight contact manifolds, where some forms of symplectic rigidity are expected. Understanding the boundary between these two phenomena in various forms is at the heart of contact topology.

One way to study contact manifolds is from a surgical perspective. Weinstein \cite{zbMATH00011093} showed that one can modify a contact manifold by attaching a symplectic handle along a neighborhood of an isotropic sphere, which is now referred to as a Weinstein handle \cite{zbMATH06054083}. Such a procedure is called an isotropic surgery by Conway and Etnyre \cite{zbMATH07206659}. One can reverse the procedure by attaching a symplectic handle along a neighborhood of a coisotropic sphere, which leads to the concept of coisotropic surgeries \cite{zbMATH07206659}. Among them, arguably, the most interesting case is when the sphere is both isotropic and coisotropic, i.e.\ Legendrian. An isotropic surgery along a Legendrian sphere is often called a contact $(-1)$ surgery, while the coisotropic surgery along the Legendrian sphere is called a contact $(+1)$-surgery. Ding and Geiges \cite{zbMATH02103046} showed that every closed\footnote{All contact manifolds are assumed to be closed in this paper.} contact $3$-manifold can be obtained by contact $(\pm 1)$-surgery along a Legendrian link in the standard contact $3$-sphere. In higher dimensions, Conway and Etnyre \cite{zbMATH07206659} showed that any contact manifold can be obtained from the standard contact sphere via a sequence of isotropic and coisotropic surgeries. Therefore, to determine whether a contact manifold is overtwisted or tight, one needs to understand if tightness is preserved in surgeries. In dimension $3$, by the work of Colin \cite{MR1447038} and Wand \cite{zbMATH06487151}, isotropic surgeries preserve tightness. On the other hand, the contact $(+1)$ surgery along the standard unknot in the standard contact $3$-sphere yields a tight manifold, while we have an overtwisted manifold if we stabilize the unknot and apply the surgery. Hence the devil in the question is coisotropic surgeries, in particular, contact $(+1)$-surgeries.

Invariants from pseudo-holomorphic curves, e.g.\ symplectic field theory (SFT) by Eliashberg, Givental, and Hofer \cite{zbMATH01643843} and Heegaard Floer theory by Ozsv\'ath and Szab\'o  \cite{zbMATH02144173}, provide necessary conditions for a contact manifold to be overtwisted. Namely, the contact homology must vanish \cite{zbMATH05709738} from the SFT side and the contact Ozsv\'ath-Szab\'o invariant must vanish \cite{zbMATH02207895} from the Heegaard Floer theory side. Bourgeois and Niederkr{\"u}ger \cite{zbMATH05658836} introduced the notion of algebraically overtwisted manifolds for those with vanishing contact homology. However, neither condition is sufficient by Avdek \cite{avdek2020combinatorial} and Ghiggini, Honda, and Van Horn-Morris \cite{ghiggini2007vanishing}, hence the combination of the two vanishing properties does not imply overtwistedness via a contact connected sum either. From the surgical perspective, the non-vanishing of contact homology and the non-vanishing of the contact Ozsv\'ath-Szab\'o invariant (both of which hold for the standard contact sphere when the invariants are defined) are preserved in isotropic surgeries by the functoriality of those invariants. Their behavior under coisotropic surgeries is more complicated, as illustrated by the same example above. Even though both conditions are not sufficient to determine overtwistedness, understanding them in $(+1)$-surgeries can be viewed as the first step towards the geometric question of overtwistedness through coisotropic surgeries. On the Heegaard Floer theory side, a complete answer for the vanishing of the contact Ozsv\'ath-Szab\'o invariant in $(+1)$-surgery along a Legendrian knot was obtained by Golla \cite{zbMATH06413573}. In \cite{DLW}, Ding, Li, and Wu studied the vanishing of the contact Ozsv\'ath-Szab\'o invariant for $(+1)$-surgeries on two-component links. On the SFT side, the vanishing of contact homology through $(+1)$-surgeries was first studied by Avdek \cite{avdek2020combinatorial} in the standard contact $3$-sphere. In this paper, we study the same question but for general dimensions. In particular, our main theorem below can be viewed as an SFT analog of Ding-Li-Wu's result.
\begin{theorem}\label{thm:main}
    Let $Y^{2n-1}$ be the contact boundary of a Liouville domain $W$, where $W$ is one of the following:
    \begin{enumerate}
        \item $W=V\times \D$ for a Liouville domain $V$ and $\D\subset \C$ is the unit disk, in particular, any subcritical Weinstein domain.
        \item $W$ is a flexible Weinstein domain \cite{zbMATH06054083} with $c_1(W)\in H^2(W;\Z)$ torsion.
    \end{enumerate}
    Let  $\Lambda$ be a Legendrian sphere in $Y$, such that $[\Lambda]\in H_{n-1}(\partial W;\Q)$ is nontrivial in $H_{n-1}(W;\Q)$.   Then the contact manifold $Y_{\Lambda}$ from a $(+1)$-surgery\footnote{The $+1$ surgery depends on a parametrization $\Lambda\simeq S^{n-1}$.} along $\Lambda$ is algebraically overtwisted, i.e.\ the contact homology vanishes.
\end{theorem}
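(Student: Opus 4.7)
The plan is to establish algebraic overtwistedness of $Y_\Lambda$ by exhibiting a rigid pseudo-holomorphic plane in the symplectization $\R\times Y_\Lambda$ asymptotic to a short ``surgery'' Reeb orbit $c_\Lambda$, witnessing the relation $\partial c_\Lambda = 1$ in the contact dg-algebra of $Y_\Lambda$. First I would recast the geometry: contact $(+1)$-surgery along $\Lambda\subset Y$ is the inverse of a contact $(-1)$-surgery, so there is a Weinstein cobordism $X$ with $\partial_-X = Y_\Lambda$ and $\partial_+X = Y$, obtained by attaching a single $n$-handle along a Legendrian belt sphere $\Lambda^*\subset Y_\Lambda$. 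The core Lagrangian disk $D\subset X$ is bounded by $\Lambda^*$ at the concave end, and its Liouville trace through $X$ meets the original Legendrian $\Lambda$ at the convex end. In the standard local model of the surgery, $Y_\Lambda$ carries a short simple Reeb orbit $c_\Lambda$ near the belt sphere that bounds a Lagrangian disk of small action inside the handle region; this is the candidate orbit for which I aim to produce a holomorphic plane.

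Second, I would use the filling $W$ to provide augmentation data. Under either hypothesis the contact dg-algebra of $Y$ is tractable: for $W=V\times\D$ the domain is subcritical and the contact homology of $Y$ admits a K\"unneth-type decomposition inherited from $V$; for flexible Weinstein $W$ with torsion $c_1$, the surgery formula of Bourgeois-Ekholm-Eliashberg expresses $\CH(Y)$ through the Chekanov-Eliashberg algebras of the loose attaching Legendrians of $W$, all of which admit canonical augmentations. In both cases, counts of pseudo-holomorphic disks in $W$ equip the contact dg-algebra of $Y$ with a canonical augmentation $\epsilon_W$ compatible with the surgery cobordism $X$, and this is what one propagates back to $Y_\Lambda$ through the Weinstein handle.

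Third, and most crucially, I would use the hypothesis $[\Lambda]\neq 0$ in $H_{n-1}(W;\Q)$. The Lagrangian disk $D\subset X$ bounded by $\Lambda^*$ would close up to an absolute Lagrangian $n$-cycle in $W\cup_Y X$ only if $\Lambda$ bounded some rational chain in $W$; the non-bounding hypothesis rules this out, and the resulting relative-homology obstruction forces the SFT count of broken configurations over the compactified moduli space of holomorphic planes asymptotic to $c_\Lambda$ in $\R\times Y_\Lambda$ to be nonzero. Concretely, I would carry out a neck-stretching along $Y$ inside $W\cup X$: a broken configuration decomposes into a plane in $\R\times Y_\Lambda$, a cylinder in $\R\times Y$, and a holomorphic disk in $W$ with boundary data controlled by $\epsilon_W$; the homological hypothesis on $[\Lambda]$ prevents cancellation with competing buildings, so one reads off $\partial c_\Lambda = 1$. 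The main technical obstacle is this moduli-space analysis: one must verify transversality in the filled cobordism and control the SFT compactification, which is precisely where the flexibility of $W$ (or its product structure) enters to tame the curve counts in the filling and isolate the contribution from the surgery handle, in exact analogy with the role of the two-component link hypothesis in the Heegaard Floer argument of \cite{DLW}.
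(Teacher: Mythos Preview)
Your proposal has a genuine gap: the central object you posit---a single short ``surgery'' Reeb orbit $c_\Lambda$ on $Y_\Lambda$ with $\partial c_\Lambda=1$---does not exist in the form you describe, and the paper's mechanism is quite different.  In the surgery cobordism $W_\Lambda$ the concave boundary is $Y_\Lambda$ and the convex boundary is $Y$; the Bourgeois--Ekholm--Eliashberg picture creates new short Reeb orbits on the \emph{convex} side of a critical handle (from Reeb chords of the attaching Legendrian), so no canonical small orbit is produced on $Y_\Lambda$.  Correspondingly, the element of $\CC(Y_\Lambda)$ whose contact-homology differential is a nonzero constant is not a single generator but a genuine polynomial
\[
\sum_{[\Gamma]}\frac{1}{\mu_\Gamma\kappa_\Gamma}\,\#\overline{\cM}_{H,W_\Lambda}(\overline{\alpha},L,\Gamma)\,q^{\Gamma},
\]
whose coefficients are counts of \emph{Hamiltonian}-perturbed Floer planes in $\widehat{W}_\Lambda$ with a point constraint on the Lagrangian \emph{cocore} $L$ (so $\partial L=\Lambda\subset Y$, not $\Lambda^*\subset Y_\Lambda$) and with negative punctures on Reeb orbits $\Gamma$ of $Y_\Lambda$.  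The relation $\partial_{\CH}(\cdots)\ne 0$ is read off from the boundary of the $1$-dimensional moduli space: the Floer-breaking and the SFT-breaking pieces are controlled, and what survives is exactly $\#\overline{\cM}_{Y,H}(\overline{\alpha},\Lambda)\ne 0$.

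The second gap is the role of the homological hypothesis.  You invoke it only as a vague obstruction to capping $D$ by a chain in $W$, but the paper uses it precisely through symplectic cohomology: for $W=V\times\D$ or $W$ flexible with $c_1$ torsion, there is a closed class $\overline{\alpha}\in SH^*_{+,<2}(W)$ with $\langle\delta_\partial(\overline{\alpha}),[\Lambda]\rangle\ne 0$, and (crucially) the Conley--Zehnder/period control on these fillings guarantees that in the symplectization $\widehat{Y}$ the moduli spaces $\overline{\cM}_{Y,H}(\overline{\alpha},\overline{\beta},\Gamma)$ with $\Gamma\ne\emptyset$ are empty and $\overline{\cM}_{Y,H}(\overline{\alpha},\Lambda)$ is nonempty (\Cref{prop:curve}).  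This is what kills the Floer-breaking boundary terms after neck-stretching along $\{1-\eta\}\times Y\subset W_\Lambda$.  Your outline never produces a Floer input $\overline{\alpha}$ with these properties, and without it there is no way to isolate the contribution you want; the augmentation $\epsilon_W$ alone does not do this.
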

An immediate corollary of \Cref{thm:main} is the following.
\begin{corollary}\label{cor:OT}
    Let $V$ be a Liouville domain and $L\subset V$ be a Lagrangian sphere such that $[L]\ne 0 \in H_*(V;\Q)$, then for any Dehn-Seidel twist $\tau_L$\footnote{As a Dehn-Seidel twist also depends on a parametrization $L\simeq S^n$.}, the open book $\mathrm{OB}(V,\tau^{-1}_{L})$ with page $V$ and monodromy $\tau^{-1}_L$ has vanishing contact homology.
\end{corollary}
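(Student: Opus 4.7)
The plan is to deduce Corollary \ref{cor:OT} directly from Theorem \ref{thm:main}(1) applied to the filling $W = V \times \D$. The contact boundary $\partial(V \times \D)$ carries a canonical open book decomposition $\mathrm{OB}(V, \mathrm{id})$ with page $V$ and trivial monodromy, so it suffices to identify $\mathrm{OB}(V, \tau_L^{-1})$ with the result of a contact $(+1)$-surgery on $\partial(V \times \D)$ along a suitable Legendrian sphere.

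The key input is a standard open-book surgery formula: given a Lagrangian sphere $L \subset V$, one obtains a Legendrian lift $\Lambda \subset \partial(V \times \D)$ sitting inside a page, and contact $(+1)$-surgery on $\Lambda$ (with the parametrization induced by $L \simeq S^n$) modifies the monodromy by post-composition with $\tau_L^{-1}$. Hence the resulting contact manifold is exactly $\mathrm{OB}(V, \tau_L^{-1})$. This translation between Dehn--Seidel twists in monodromy and $(+1)$-surgery on Legendrian lifts is classical in dimension three (Giroux, Ding--Geiges) and is available in higher dimensions through the Weinstein-handle description of open books.

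It remains to verify the homological hypothesis of Theorem \ref{thm:main}. Since $W = V \times \D$ deformation retracts onto $V$, the inclusion $V \hookrightarrow W$ induces an isomorphism $H_{n-1}(W;\Q) \cong H_{n-1}(V;\Q)$; under this identification the class $[\Lambda] \in H_{n-1}(\partial W;\Q)$ maps to $[L] \in H_{n-1}(V;\Q)$, which is nonzero by assumption. Therefore Theorem \ref{thm:main}(1) applies and gives the vanishing of contact homology for $\mathrm{OB}(V, \tau_L^{-1})$.

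The main obstacle is purely the first step, namely to pin down the precise translation of contact $(+1)$-surgery on a Legendrian lift of $L$ into the monodromy change $\phi \mapsto \phi \circ \tau_L^{-1}$, ensuring that the orientation and parametrization conventions match. Once this dictionary is in place, no further analytic work is required: Corollary \ref{cor:OT} becomes a direct specialization of Theorem \ref{thm:main}.
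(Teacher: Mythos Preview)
Your proposal is correct and follows essentially the same approach as the paper: identify $\mathrm{OB}(V,\tau_L^{-1})$ as the result of a $(+1)$-surgery on the Legendrian lift of $L$ in $\mathrm{OB}(V,\Id)=\partial(V\times\D)$, then verify that $[L]\ne 0$ in $H_*(V;\Q)\cong H_*(V\times\D;\Q)$ so that Theorem~\ref{thm:main}(1) applies. The paper's proof is a two-sentence version of exactly this, citing the injectivity of $H_*(V;\Q)\to H_*(\partial(V\times\D);\Q)\to H_*(V\times\D;\Q)$ for the homological check and taking the surgery/monodromy dictionary (which you flag as the main thing to nail down) as known, referring elsewhere in the paper to Avdek's work for this translation.
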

\begin{proof}
    The open book $\mathrm{OB}(V,\tau^{-1}_{L})$ is obtained from $(+1)$-surgery on the Legendrian lift of $L$ in the open book $\mathrm{OB}(V,\Id)=\partial(V\times \D)$, then \Cref{thm:main} applies as $H_*(V;\Q)\to H_*(\partial(V\times \D);\Q) \to H_*(V\times \D;\Q)$ is injective.
\end{proof}

In particular, the homotopically standard overtwisted $S^{2n+1}= \mathrm{OB}(DT^*S^n,\tau^{-1})$ has vanishing contact homology; this was established by Bourgeois and van Koert \cite{zbMATH05709738} by direct computation. The assumption on the fundamental class of $L$ is likely to be redundant in view of the regular Lagrangian conjecture of Eliashberg, Ganatra, and Lazarev \cite[Problem 2.5]{zbMATH07195660}. Although many of the open books in \Cref{cor:OT} are negative stabilization, hence overtwisted \cite{zbMATH07010365}, it is unclear whether \Cref{cor:OT} always yields overtwisted manifolds.

\begin{corollary}\label{cor:3D}
    Let $\Lambda \cup U$ be a two-component link in $(S^3,\xi_{\std})$ with a nontrivial linking number, where $U$ is the standard unknot. Then the $(+1)$ surgeries along $\Lambda \cup U$ yield a contact manifold with vanishing contact homology.
\end{corollary}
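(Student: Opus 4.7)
The plan is to derive \Cref{cor:3D} from \Cref{thm:main} by performing the $(+1)$-surgery on the unknot component $U$ first. Since contact $(+1)$-surgeries along disjoint Legendrian components commute (they are supported in disjoint neighborhoods), the contact $3$-manifold produced by simultaneous $(+1)$-surgery on $\Lambda\cup U$ is the same as the one built by first doing the $(+1)$-surgery on $U$ alone and then a $(+1)$-surgery on the image of $\Lambda$. It is classical (e.g.\ Ding-Geiges \cite{zbMATH02103046}) that $(+1)$-surgery on the standard Legendrian unknot in $(S^3,\xi_{\std})$ produces $(S^1\times S^2,\xi_{\std})$, the unique tight contact structure, and this is the contact boundary of the subcritical Weinstein domain $W=S^1\times D^3$. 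With $n=2$, a Legendrian sphere is just a Legendrian knot, so the image of $\Lambda$ qualifies as input to case (1) of \Cref{thm:main}.

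The next step is to verify the homology hypothesis of \Cref{thm:main}. In $H_1(S^1\times S^2;\Z)\cong\Z$, the class of the image of $\Lambda$ can be computed by its algebraic intersection with the co-core $2$-sphere of the $(+1)$-surgery handle; this equals the linking number $\operatorname{lk}(\Lambda,U)$ in $(S^3,\xi_{\std})$, which is nonzero by assumption. Since the inclusion-induced map $H_1(S^1\times S^2;\Q)\to H_1(S^1\times D^3;\Q)$ is the identity $\Q\to\Q$, the class $[\Lambda]$ stays nontrivial in $H_1(W;\Q)$, matching the hypothesis of \Cref{thm:main} exactly.

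Applying \Cref{thm:main} to $(W,\Lambda)$ then gives that the further $(+1)$-surgery on $\Lambda$ inside $(S^1\times S^2,\xi_{\std})$ is algebraically overtwisted, and by commutativity of disjoint surgeries this coincides with the original $(+1)$-surgery on $\Lambda\cup U$. The genuine difficulty sits entirely in \Cref{thm:main}; the only things to verify here are the classical identification of $(+1)$-surgery on the standard Legendrian unknot and the linking-number computation of $[\Lambda]$, both of which are routine. Consequently, I do not anticipate any substantive obstacle in the argument beyond what has already been dealt with in \Cref{thm:main}.
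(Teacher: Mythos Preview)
Your proposal is correct and follows essentially the same approach as the paper: first perform the $(+1)$-surgery on $U$ to obtain $S^1\times S^2=\partial(T^*S^1\times\D)$ (equivalently $\partial(S^1\times D^3)$), observe that $\Lambda$ becomes a Legendrian whose homology class in the $S^1$ factor is the linking number, and then apply \Cref{thm:main}. The only cosmetic difference is that the paper writes the filling as $T^*S^1\times\D$ to match case (1) of \Cref{thm:main} explicitly.
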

\begin{proof}
    We first apply $(+1)$ surgery along $U$ to get $Y=\partial(DT^*S^1\times \D)=S^1\times S^2$, then $\Lambda$ becomes a Legendrian knot $\Lambda'$ on $Y$ representing a nontrivial homology class in the $S^1$ factor as the linking number is nontrivial. Then we apply \Cref{thm:main} to $\Lambda'$.
\end{proof}

Ding, Li, and Wu \cite[Theorem 1.1]{DLW} showed that the contact Ozsv\'ath-Szab\'o invariant also vanishes for contact manifolds in \Cref{cor:3D}. In fact, they established the vanishing result for other types of $U$, which are ``unknots" in the Heegaard Floer theory sense. On the other hand, the nontrivial linking number is a crucial requirement, and so is the homology requirement in our formulation. Moreover, our construction enjoys a local property as follows.

\begin{theorem}\label{thm:main'}
    In the following two cases:
    \begin{enumerate}
        \item\label{thm1} $Y_1$ is flexibly fillable or $Y_1=\partial(V\times \D)$ such that $c_1(Y_1)$ is torsion, $Y_2$ is a contact manifold of the same dimension with $c_1(Y_2)$ torsion.
        \item\label{thm2} $Y_1=\partial(V\times \D)$ for a Weinstein domain $V$, $Y_2$ is a contact manifold of the same dimension.
    \end{enumerate}
    If $\Lambda$ is a Legendrian sphere in $Y=Y_1\# Y_2$, such that $[\Lambda]$ has non-trivial image through $H_{n-1}(Y;\Q)\simeq H_{n-1}(Y_1;\Q)\oplus H_{n-1}(Y_2;\Q)\to H_{n-1}(Y_1;\Q) \to H_{n-1}(W;\Q)$, where $W$ is the natural filling in \Cref{thm:main}, then $Y_{\Lambda}$ is algebraically overtwisted.
\end{theorem}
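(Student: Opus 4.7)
The plan is to reduce \Cref{thm:main'} to (the proof of) \Cref{thm:main} by replacing the Liouville filling $W$ used there with a Liouville cobordism $X$ whose negative end is $Y_2$ and whose positive end is $Y = Y_1 \# Y_2$.

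First, I would build $X$ as follows. Let $W_1$ be the Liouville domain provided by the hypothesis on $Y_1$: namely $W_1 = V\times\D$ in the subcritical situation, and a flexible Weinstein filling of $Y_1$ in the flexibly-fillable situation. Form $W_1 \sqcup (Y_2 \times [0,1])$ (with the given Weinstein structure on $W_1$ and the trivial Liouville structure on the collar of $Y_2$) and attach a single Weinstein $1$-handle joining a point of $\partial W_1 = Y_1$ to a point of $Y_2 \times \{1\}$. The result $X$ is a Liouville cobordism with $\partial^- X = Y_2$ and $\partial^+ X = Y_1 \# Y_2$. A Mayer--Vietoris computation (using that a $1$-handle connecting two components does not affect $H_{n-1}$ for $n\ge 2$) gives
\[
 H_{n-1}(X;\Q) \cong H_{n-1}(W_1;\Q) \oplus H_{n-1}(Y_2;\Q),
\]
so the homological hypothesis of \Cref{thm:main'} becomes exactly the statement that $[\Lambda] \in H_{n-1}(X;\Q)$ is nonzero with nontrivial $W_1$-component. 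In case \ref{thm1}, the torsion of $c_1(Y_1)$ and $c_1(Y_2)$ imply that $c_1(X)$ is torsion and that $X$ inherits the same flexibility type as $W_1$; in case \ref{thm2}, $X$ is itself subcritical.

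The next step is to rerun the proof of \Cref{thm:main} with $X$ replacing $W$. Presuming the proof of \Cref{thm:main} uses $W$ only through (i) being a Liouville (sub)cobordism with positive boundary containing $\Lambda$, (ii) nontriviality of $[\Lambda]$ in $H_{n-1}(W;\Q)$, and (iii) the stated flexibility/$c_1$ conditions, the cobordism $X$ is a valid substitute. The same SFT argument should then produce a moduli of holomorphic curves in the completion $\hat X$ (extended by the $+1$-surgery trace) witnessing an augmentation of the contact DGA of $Y_\Lambda$ that kills the unit and forces $CH(Y_\Lambda) = 0$. The twisted-coefficient refinement of \Cref{rmk:twist} should follow similarly, since classes in $H_2(Y_\Lambda;\R)$ extend naturally across the cobordism.

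The main obstacle will be verifying that the moduli-space analysis of \Cref{thm:main}, originally framed over a compact Liouville filling $W$, adapts cleanly to the noncompact Liouville cobordism $\hat X$ with nontrivial negative cylindrical end. Concretely, the holomorphic curves witnessing the vanishing augmentation must not escape into $Y_2 \times (-\infty, 0] \subset \hat X$. This should be achievable by a neck-stretching argument along the belt sphere of the attaching $1$-handle, combined with action/energy bounds localizing the vanishing curves to a neighborhood of $W_1 \cup H^1$. Carrying out these localization estimates uniformly for both cases of \Cref{thm:main'}, and for the twisted-coefficient variant, will constitute the main technical work.
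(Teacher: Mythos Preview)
Your overall architecture is close to the paper's, but there is a genuine gap in the flexibly fillable case, and your localization mechanism is not the one that actually works.

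First, the claim that ``$X$ inherits the same flexibility type as $W_1$'' is not meaningful: $X$ is a Liouville cobordism with nonempty concave end $Y_2$, not a Weinstein domain, and there is no statement of Lazarev's type giving a contact form on $\partial^+ X = Y_1\#Y_2$ with all Reeb orbits of positive SFT degree below a given action threshold. The proof of \Cref{thm:main} for the flexible case uses exactly such a contact form on $Y_1$ (\cite{zbMATH07184228}), and you cannot simply transport that conclusion to $Y_1\#Y_2$ by declaring $X$ flexible. This is not a cosmetic issue: the index positivity is what forces the moduli spaces $\overline{\cM}_{Y,H}(\overline{\alpha},\overline{\beta},\Gamma)$ with $\Gamma\ne\emptyset$ to be empty.

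Second, your proposed localization (``neck-stretching along the belt sphere of the $1$-handle'') is not a well-defined operation---the belt sphere is not a separating contact hypersurface. The paper instead works with the thin $1$-handle cobordism from $Y_1\sqcup Y_2$ to $Y_1\#Y_2$ (without gluing in $W_1$) and uses a Viterbo transfer map corrected by the \emph{trivial} augmentation $\epsilon_0$, which is legitimate only after (i) rescaling the contact form on $Y_2$ so that all its Reeb orbits have period $\gg 8$, and (ii) arranging that the remaining short orbits on $Y_1$ and the handle orbit $\gamma_h^k$ have positive SFT degree. This is how the paper handles case~\eqref{thm1}. For case~\eqref{thm2}, where there is no $c_1$ control and hence no global grading, the paper replaces the index argument by a linking/energy estimate near the binding $\partial V\times\{0\}$ of the open book $\partial(V\times\D)$ (\Cref{prop:area}): since $\pi_{\D}$ is holomorphic for a well-chosen $J$, any Floer cylinder from $\overline{\gamma}_p$ to $\overline{\gamma}_h^k$ must acquire at least area $1$ over $\D$, contradicting a carefully arranged period inequality \eqref{eqn:condition}. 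Your sketch contains no analogue of either mechanism, and without one the curves can in principle develop negative punctures on $Y_2$ or on the handle orbit, breaking the argument of \Cref{prop:corbordism}.
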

Then we can upgrade \Cref{cor:3D} to the following for the special case of $Y_1=\partial(DT^*S^{n-1}\times \D)$.
\begin{corollary}
    Let $\Lambda,U$ be two Legendrian spheres in $Y$, where $Y$ is a $2n-1$ dimensional contact manifold, and $U$ is a standard unknot in a Darboux chart. If the linking number is nontrivial\footnote{Here the linking number is defined to the intersection number of $\Lambda$ with a bounding ball of $U$ in the Darboux chart.},  the $(+1)$ surgeries along $\Lambda \cup U$ yield a contact manifold with vanishing contact homology.
\end{corollary}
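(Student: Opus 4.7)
The plan is to perform the $(+1)$-surgery on $U$ first and then invoke Theorem~\ref{thm:main'}(\ref{thm2}). Because $U$ sits inside a Darboux chart of $Y$, a standard contact neighborhood of $U$ is contactomorphic to the corresponding neighborhood of the standard Legendrian unknot $U_0\subset (S^{2n-1},\xi_{\std})$, so the $(+1)$-surgery on $U$ is a local modification and produces the contact connected sum $Y_U = Y \# Z$, where $Z$ is the result of the same $(+1)$-surgery performed on $U_0$ inside $(S^{2n-1},\xi_{\std})$.

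First I would identify $Z$ with $\partial(T^*S^{n-1}\times \D)$. The Milnor open book of the $A_1$ singularity $z_0^2+\cdots+z_{n-1}^2$ presents $(S^{2n-1},\xi_{\std}) = \mathrm{OB}(T^*S^{n-1},\tau)$, where $\tau$ is the Dehn--Seidel twist along the zero section, and $U_0$ is realized as the Legendrian lift of this zero section. Since a $(+1)$-surgery along the Legendrian lift of a Lagrangian sphere $L$ in a page composes the monodromy with $\tau_L^{-1}$ (the mechanism underlying Corollary~\ref{cor:OT}), the surgery on $U_0$ cancels $\tau$ and produces $\mathrm{OB}(T^*S^{n-1},\Id) = \partial(T^*S^{n-1}\times\D)$. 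Thus $Z$ is exactly of the form required by case~(\ref{thm2}) of Theorem~\ref{thm:main'} with Weinstein page $V = T^*S^{n-1}$.

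Next I would verify the homological hypothesis. Under the standard connected-sum splitting $H_{n-1}(Y_U;\Q)\cong H_{n-1}(Y;\Q)\oplus H_{n-1}(Z;\Q)$, the $Z$-component of $[\Lambda]$ equals $k\cdot [S^*]$, where $k = \mathrm{lk}(\Lambda, U)$ and $S^*$ is the co-core sphere of the surgery handle: the bounding ball $D^n$ of $U$ in the Darboux chart, with $\Lambda\cdot D^n = k$, gives after deletion of a neighborhood of $U$ a relative chain in $Z$ whose boundary realizes $[\Lambda] - k[S^*]$. Under the inclusion $Z\hookrightarrow T^*S^{n-1}\times \D$, the class $[S^*]$ maps to the generator of $H_{n-1}(T^*S^{n-1};\Q)\cong \Q$, because in the description $Z=\mathrm{OB}(T^*S^{n-1},\Id)$ the sphere $S^*$ is the Legendrian lift of the zero section. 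Hence $k\neq 0$ supplies the nontriviality demanded by Theorem~\ref{thm:main'}(\ref{thm2}), and applying it to $\Lambda\subset Y_U$ yields the claimed algebraic overtwistedness of the manifold obtained by $(+1)$-surgery on $\Lambda\cup U$.

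The main subtlety I expect is confirming that the Legendrian lift of the zero section in the Milnor open book of $S^{2n-1}$ really is isotopic to the standard Legendrian unknot $U_0$, so that the local model is genuinely $\partial(T^*S^{n-1}\times\D)$, together with the orientation bookkeeping needed for the linking-number computation to identify the $Z$-component of $[\Lambda]$ with exactly $k[S^*]$.
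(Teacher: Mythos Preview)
Your proposal is correct and follows precisely the route the paper indicates: the paper introduces this corollary as the upgrade of Corollary~\ref{cor:3D} obtained by taking $Y_1=\partial(T^*S^{n-1}\times\D)$ in Theorem~\ref{thm:main'}\eqref{thm2}, i.e.\ one first performs the $(+1)$-surgery on $U$ to produce $Y\#\partial(T^*S^{n-1}\times\D)$ and then applies Theorem~\ref{thm:main'} using that the linking number gives the required nontrivial homology class. Your write-up simply supplies the details the paper leaves implicit (the open-book identification of the local model and the linking-number computation), so the approaches coincide.
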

As any overtwisted contact manifold $Y\# (S^{2n-1},\xi_{\mathrm{ot}})$ can be written as $(+1)$ surgeries from such links, this yields another proof that overtwisted contact manifolds have vanishing contact homology, which was first proved by Bourgeois and van Koert \cite{zbMATH05709738}. 

A $(+1)$-surgery gives rise to a Weinstein cobordism whose concave boundary is $Y_{\Lambda}$, while the convex boundary is $Y$ and $\Lambda$ is filled by a Lagrangian disk in the cobordism. On the other hand, the contact manifold $Y$ in \Cref{thm:main} enjoys a strong uniqueness property for symplectic fillings by \cite{zbMATH07367119,zbMATH07673358}, in particular, the homology class $[\Lambda]$ should survive in the filling. Indeed, one can apply \cite[Theorem 4.1]{bowden2022making} to prove that the contact manifold from $(+1)$-surgery has no strong fillings. The proof of \Cref{thm:main,thm:main'} follows from singling out the pseudo-holomorphic curves obstructing fillings, whose degeneration in the surgery cobordism then yields the vanishing of contact homology of the concave boundary. Contact homology of $(+1)$ surgeries was studied by Avdek \cite{Av,avdek2020combinatorial}, who explored a deeper picture relating the relative SFT of the convex boundary and the absolute SFT of the concave boundary. We point out here that our results remain in the realm of absolute SFT, i.e.\ we only use the topology of the surgery cobordism but not holomorphic curves with Lagrangian boundary conditions.

It is quite a challenge to determine whether contact manifolds in \Cref{thm:main,thm:main'} are overtwisted. In dimension $3$, there are sufficient conditions for the $(+1)$-surgeries to yield overtwisted manifolds by Ozbagci \cite{zbMATH02147034} and Lisca-Stipsicz \cite{zbMATH05190395} for knots and Ding-Li-Wu \cite{DLW} for links. In higher dimensions, Casals, Murphy, and Presas \cite{zbMATH07010365} showed that $(+1)$-surgeries along loose Legendrian spheres give overtwisted manifolds. Indeed, some cases of \Cref{thm:main} give overtwisted manifolds, for example, $W=DT^*{S^{n-1}}\times \D$ and $\Lambda$ is the Legendrian lift of the Lagrangian zero section in $DT^*S^{n-1}$, as this Legendrian is loose/stabilized. On the other hand, there are Legendrian knots with the homology property in \Cref{thm:main} that are not stabilized found by Ekholm and Ng \cite[Corollary 2.22, Proposition 3.9]{zbMATH06471194}. In higher dimensions, we have many such Legendrians from exotic Weinstein balls constructed in \cite{abouzaid2010altering,zbMATH05553983,zbMATH02242665,zbMATH07367119} using the work of Lazarev \cite{zbMATH07305775}.
\begin{proposition}\label{prop:exotic}
    For $Y=\partial(DT^*S^{n-1}\times \D)\simeq S^{n-1}\times S^n$ with $n\ge 3$, there are infinitely many different non-loose Legendrian spheres in $Y$ that are smoothly isotopic to the standard loose $S^{n-1}$ above. When $n$ is odd\footnote{We expect this condition to be redundant.}, those Legendrians are formally Legendrian isotopic to the standard loose $S^{n-1}$.
\end{proposition}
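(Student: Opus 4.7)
The plan is to import non-loose Legendrians from the standard contact sphere into $Y$ via a Legendrian connected sum with $\Lambda$.

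The starting point is Lazarev's work \cite{zbMATH07305775}, which associates to each exotic Weinstein ball structure $B$ on $D^{2n}$ (with $c_1(B)=0$ and $\partial B \cong (S^{2n-1},\xi_{\std})$) a non-loose Legendrian sphere $U_B \subset (S^{2n-1},\xi_{\std})$ that is smoothly isotopic to the standard Legendrian unknot, is formally Legendrian isotopic to it when $n$ is odd, and is distinguished as a Legendrian by a filling-dependent invariant coming from $B$. Combined with the infinite families of pairwise non-symplectomorphic exotic Weinstein balls produced in \cite{abouzaid2010altering,zbMATH05553983,zbMATH02242665,zbMATH07367119} (distinguished by, e.g., symplectic cohomology of appropriate Weinstein sums), this yields an infinite sequence $\{U_k\}_{k \ge 1}$ of pairwise non-Legendrian-isotopic, non-loose Legendrian spheres in $(S^{2n-1},\xi_{\std})$, all smoothly (and in odd dimensions, formally) indistinguishable from the standard Legendrian unknot.

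To transplant these into $Y$, I would fix a Darboux ball $D \subset Y$ meeting $\Lambda$ in a Legendrian arc, and form the Legendrian connected sum $\Lambda_k := \Lambda \,\#_{\mathrm{Leg}}\, U_k$ inside $D$. The smooth isotopy class of $\Lambda_k$ equals that of $\Lambda$ because $U_k$ is smoothly an unknot, and the same holds for the formal Legendrian class when $n$ is odd, since formal data is additive under connected sum and the formal data of $U_k$ matches that of the standard unknot. Non-looseness of $\Lambda_k$ is inherited from $U_k$: a loose chart for $\Lambda_k$ disjoint from the $\Lambda$-summand could be isotoped into the $U_k$-summand region to yield a loose chart for $U_k$ in $(S^{2n-1},\xi_{\std})$, contradicting Lazarev's theorem.

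Pairwise distinctness of the $\Lambda_k$ would follow by propagating the same filling-dependent invariant used to distinguish the $U_k$: attaching a critical Weinstein handle along $\Lambda_k$ inside $T^*S^{n-1}\times \D$ produces a Weinstein domain whose structure near the $U_k$-summand encodes $B_k$, since the connect-sum region is localized away from the $T^*S^{n-1}$-factor. The main obstacle will be making this detection precise in the presence of the nontrivial $T^*S^{n-1}\times \D$ ambient filling rather than the standard ball; this should follow from a Künneth-type splitting of the relevant symplectic cohomology invariants together with the locality of the Lazarev construction, so that the pairwise non-isomorphism of the $B_k$'s is faithfully reflected in the resulting domains attached along $\Lambda_k$.
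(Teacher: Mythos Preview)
Your approach has a fatal flaw in the non-looseness step. The Legendrian $\Lambda$ you are connect-summing with---the Legendrian lift of the zero section of $T^*S^{n-1}$ in $Y=\partial(T^*S^{n-1}\times\D)$---is itself \emph{loose} (this is stated explicitly in the paper, and is the reason the $(+1)$-surgery along it yields the overtwisted sphere). A loose chart for $\Lambda$ sits in a Darboux ball that can be chosen disjoint from the region where you perform the connected sum with $U_k$; that chart persists unchanged in $\Lambda_k=\Lambda\,\#_{\mathrm{Leg}}\,U_k$, so every $\Lambda_k$ is loose. Your argument only addresses hypothetical loose charts on the $U_k$-side and overlooks the guaranteed one on the $\Lambda$-side. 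By Murphy's $h$-principle all your $\Lambda_k$ are then Legendrian isotopic to $\Lambda$, and the distinctness argument collapses as well (attaching a critical handle along a loose sphere gives a flexible, hence standard, result regardless of $k$).

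The paper avoids this entirely by invoking Lazarev's result in the form that is actually relevant: any Weinstein structure on $\C^n$ ($n\ge 3$) arises by attaching a \emph{single} critical handle to $T^*S^{n-1}\times\D$ along a Legendrian sphere in $Y=\partial(T^*S^{n-1}\times\D)$, in the smooth isotopy class of $S^{n-1}\times\{\mathrm{pt}\}$. Each exotic Weinstein ball therefore directly furnishes a Legendrian \emph{already in} $Y$, with no transplantation or connected sum needed. Non-looseness follows because a loose attaching sphere would make the resulting domain flexible, hence the standard $\C^n$; pairwise non-isotopy follows because Legendrian-isotopic attaching spheres yield deformation-equivalent Weinstein domains, while the infinitely many exotic $\C^n$'s of Seidel--Smith, McLean, and Abouzaid--Seidel are pairwise distinct. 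Smooth isotopy comes from the Whitney trick, and the formal isotopy for odd $n$ is a Bott periodicity computation showing $\pi_{n-1}(U(n))=0$.
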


One of the motivations of this project is to study the differences between overtwisted manifolds and algebraically overtwisted manifolds.
\begin{question}[Folklore]\label{question:AO}
For $n\ge 2$, are there algebraically overtwisted but tight $2n-1$ dimensional contact manifolds?
\end{question}
To put it in a broader perspective, this question is one of the fundamental questions to understand the boundary between flexibility and rigidity phenomena in symplectic and contact topology. The first example of an algebraically overtwisted tight manifold was found by Avdek \cite{avdek2020combinatorial} in dimension $3$. The example follows from a $(+1)$-surgery on $(S^3,\xi_{\std})$ along a trefoil knot, and the tightness follows from the non-vanishing of the contact Ozsv\'ath-Szab\'o invariant. In view of \cite[Theorem 1.1]{DLW}, although \Cref{thm:main} may give new examples in dimension $3$, the tightness criterion from the contact Ozsv\'ath-Szab\'o invariant does not apply, i.e.\ we need other criteria of tightness. In fact, the lack of tight criteria beyond contact homology is the major difficulty in answering \Cref{question:AO} in higher dimensions. Indeed, the existence of fillings, hypertight property, and properties on Conley-Zehnder indices, used as tight criteria in general dimensions, are all manifestations of the non-vanishing of contact homology. Nevertheless, \Cref{thm:main} potentially solves the easy half of \Cref{question:AO} by providing a flexible enough list of algebraically overtwisted manifolds. More precisely, we ask the following question.

\begin{question}
    If we apply a $(+1)$-surgery along Legendrian spheres in \Cref{prop:exotic}, do we get (different) tight contact manifolds?
\end{question}
We suspect the answer to be affirmative, for otherwise, the $(+1)$-surgeries would yield the homotopically standard overtwisted sphere, i.e.\ we get infinitely many different ways to get the homotopically standard overtwisted sphere but with the same formal data (at least for $n$ odd).
\subsection*{Acknowledgments}
The author is supported by the National Natural Science Foundation of China under Grant No.\ 12288201 and 12231010. The author is grateful to Russell Avdek for enlightening discussions that led to the functorial perspective in \S \ref{ss:43}, and Otto van Koert for pointing out \cite{zbMATH06562001} which leads to the proof of \Cref{prop:corb'} and for their feedback on a preliminary version of the paper. The author would like to thank Youlin Li and Zhongtao Wu for helpful discussions and interest in the project.
\section{Contact $(+1)$-surgeries}
In this section, we review contact $(+1)$-surgeries following Conway--Etnyre \cite{zbMATH07206659}. The definition of a contact $(+1)$-surgery along a Legendrian sphere is implicit in the theory of Weinstein handle attachments \cite{zbMATH06054083,zbMATH04147116,zbMATH00011093}. We first recall a model for the Weinstein $k$-handle for $k\le n$ as follows. Let $\omega = \sum_{i=1}^k\rd q_i\wedge \rd p_i + \sum_{i=1}^{n-k}\rd x_i\wedge \rd y_i$ be the standard symplectic structure on $\R^{2k}\times \R^{2n-2k}$. Then we have the following Liouville vector field 
$$v=\sum_{i=1}^k(-p_i\partial_{p_i}+2q_i\partial_{q_i})+\frac{1}{2}\sum_{i=1}^{n-k}(x_i\partial_{x_i}+y_i\partial_{y_i}).$$
For $a,b>0$, let $D_a$ be the disk of radius $a$ in the $p_i$ subspace and $D_b$ the disk of radius $b$ in the $q_i,x_i,y_i$ subspace. Then $(H_{a,b}=D_a\times D_b,\omega)$ is a model for the Weinstein $k$-handle. The Liouville vector field points outward along $\partial_+H_{a,b}:=D_a\times \partial D_b$ and inward along $\partial_-H_{a,b}:=\partial D_a \times D_b$, which in particular determines contact structures on the boundary.

Note that $S_a=\partial D_a \times \{0\}$ is an isotropic sphere and $S_b=\{0\} \times \partial D_b$ is a coisotropic sphere. By Moser's trick, the germ of the contact structure along $S_a\subset \partial_-H_{a,b}$ is contactomorphic to the germ of a contact structure along an isotropic sphere with a trivial conformal symplectic normal bundle (the quotient of the symplectic orthogonal of the tangent bundle by the tangent bundle) in any contact manifold. Similarly, the germ along $S_b\subset \partial_+H_{a,b}$ is contactomorphic to the germ along a coisotropic sphere in any contact manifold \cite[Lemma 3.4]{zbMATH07206659}. As a consequence, given an isotropic sphere with a trivialization of the conformal symplectic normal bundle in a contact manifold $Y$, we can glue $H_{a,b}$ to a neighborhood of the isotropic sphere along $\partial_-H_{a,b}$ for $b\ll 1$ using the Liouville vector field. This yields a Weinstein cobordism with concave boundary $Y$ and a new convex boundary $Y'$; we call this procedure an isotropic surgery. On the other hand, given a coisotropic sphere in $Y$, we can glue $H_{a,b}$ to a neighborhood of the coisotropic sphere along $\partial_+H_{a,b}$ for $a\ll 1$. This yields a Weinstein cobordism with convex boundary $Y$ and a new concave boundary $Y''$. This procedure is referred to as a coisotropic surgery. The isotropic surgery and coisotropic surgery are reverse operations to each other, i.e.\ we can undo an isotropic (resp.\ coisotropic) surgery by applying a coisotropic (resp.\ isotropic) surgery to the coisotropic (resp.\ isotropic) belt sphere in the surgery handle \cite[Lemma 3.9, Proposition 3.10]{zbMATH07206659}. 

By the functoriality of contact homology, only coisotropic surgeries have a chance to kill the contact homology. However, unlike the situation for isotropic spheres $S^{k-1}$ with $k<n$, coisotropic spheres do not enjoy an $h$-principle \cite[Theorem 1.3 and Remark 6.1]{zbMATH06670705}, which makes the existence of such manifolds much harder to identify. The situation for Legendrian spheres, i.e.\ spheres that are both isotropic and coisotropic, is a mixture of flexibility and rigidity: we have an $h$-principle for loose Legendrians by Murphy \cite{MR4172336}, yet there exists an enormous class of non-loose Legendrians exhibiting various forms of symplectic rigidity. 

\begin{definition}
A contact $(+1)$-surgery along a Legendrian sphere $\Lambda \subset Y$ is a coisotropic surgery along the Legendrian. We use $Y_{\Lambda}$ to denote the resulting contact manifold, and $W_{\Lambda}$ to denote the surgery Weinstein cobordism from $Y_{\Lambda}$ to $Y$.  
\end{definition}

\begin{remark}
    The $(+1)$-surgery depends on a parametrization $S^{n-1}\simeq \Lambda$. The resulting contact manifold, and even the underlying smooth manifold, may depend on this parametrization in general. We suppress this choice in our theorems, as the vanishing of contact homology does not depend on the parametrization.
\end{remark}

In \cite{zbMATH07455583}, Avdek gave an alternative definition of $(+1)$ surgery on Legendrian spheres, based on a Dehn--Seidel twist. This definition was used in \cite{zbMATH07010365}, where Casals, Murphy, and Presas show that the result of $(+1)$ surgery on a loose Legendrian knot is overtwisted.
\section{Contact homology and symplectic cohomology}
\subsection{Contact homology}
We will first recall the definition of contact homology. Let $(Y,\xi)$ be a co-oriented contact manifold and $\alpha$ a contact form such that all Reeb orbits are non-degenerate. Let $V$ denote the $\Q$ vector space generated by formal variables $q_\gamma$ for each good orbit $\gamma$ of $(Y,\alpha)$. We grade $q_\gamma$ by $|q_{\gamma}|:=\mu_{CZ}(\gamma)+n-3$ (the SFT degree), which should be understood as a well-defined $\Z/2$ grading in general unless $\det_{\C}\xi$ is trivialized (and a $\Q$-grading if $\det_{\C}\oplus^N \xi$ is trivialized for $N>1\in \N$). The chain complex  $\CC_*(Y,\alpha)$ for the contact homology is the free symmetric algebra $SV$. The differential is defined as follows.
\begin{equation}\label{eqn:partial}
\partial_{\CH}(q_{\gamma}) = \sum_{[\Gamma]} \#\overline{\cM}_{Y}(\{\gamma \},\Gamma) \frac{1}{\mu_{\Gamma}\kappa_{\Gamma}}q^{\Gamma}.
\end{equation}
The sum is over all multisets $[\Gamma]$, i.e.\ sets with duplicates. And $\Gamma$ is an ordered representation of $[\Gamma]$, e.g.\ $$\Gamma=\{\underbrace{\gamma_1,\ldots,\gamma_1}_{i_1}, \ldots, \underbrace{\gamma_m,\ldots,\gamma_m}_{i_m}\}$$ is an ordered set of good orbits with $\gamma_i\ne \gamma_j$ for $i\ne j$. We write $\mu_{\Gamma}=i_1!\ldots i_m!$ and $\kappa_{\Gamma}=\kappa^{i_1}_{\gamma_1}\ldots \kappa^{i_m}_{\gamma_m}$, which is the product of multiplicities of the Reeb orbits, and $q^{\Gamma}=q_{\gamma_1} \ldots  q_{\gamma_m}$. Here $\overline{\cM}_{Y}(\Gamma^+,\Gamma^-)$ is the SFT compactification \cite{zbMATH02062477} of the moduli space of rational holomorphic curves with asymptotic Reeb orbits $\Gamma^+=\{\gamma_1^+,\ldots,\gamma_{s^+}^+ \}$, $\Gamma^-=\{\gamma_1^-,\ldots,\gamma_{s^-}^- \}$ near positive and negative punctures respectively (with free asymptotic markers and prescribed parametrization of Reeb orbits, i.e.\ base points; the same applies to all curves with punctures asymptotic to Reeb orbits in the rest of the paper. This is the reason why the structural coefficient has $1/\kappa_{\Gamma}$), modulo the $\R$-translation, in the symplectization $\widehat{Y}=(\R_t\times Y, \rd(e^t\alpha))$ using an $\R$-invariant almost complex structure $J$ such that $J(\xi)=\xi$, $\rd(\cdot, J\cdot)$ is positive on $\xi$ and $J\partial_t = R$ (the Reeb vector field). The virtual dimension of $\overline{\cM}_{Y}(\Gamma^+,\Gamma^-)$ is given by
$$(n-3)(2-s^+-s^-)+\sum_{i=1}^{s^+}\mu_{CZ}(\gamma_i^+) - \sum_{i=1}^{s^-}\mu_{CZ}(\gamma_i^-)-1,$$
where the Conley-Zehnder index is defined using a symplectic trivialization of $u^*\xi$ (or $\det_{\C}u^*\xi$) for $u\in \overline{\cM}_{Y}(\Gamma^+,\Gamma^-)$.

The orientation property of $\overline{\cM}_{Y}(\{\gamma\},\Gamma)$ implies that \eqref{eqn:partial} is independent of the representative $\Gamma$. \eqref{eqn:partial} is always a finite sum by SFT compactness \cite[Theorem 10.1]{zbMATH02062477} as the Hofer energy is bounded by twice the period of $\gamma$ \cite[Lemma 5.16]{zbMATH02062477}. The differential on $\CC_*(Y,\alpha)$ is defined by the Leibniz rule
$$\partial(q_{\gamma_1} \ldots  q_{\gamma_l})=\sum_{j=1}^l (-1)^{|q_{\gamma_1}|+\ldots + |q_{\gamma_{j-1}}|}q_{\gamma_1} \ldots  q_{\gamma_{j-1}} \partial(q_{\gamma_j}) q_{\gamma_{j+1}}\ldots  q_{\gamma_l}.$$
The relation $\partial^2=0$ follows from the boundary configuration of $\overline{\cM}_{Y}(\{\gamma\},\Gamma)$ with virtual dimension $1$.

Given an exact cobordism $(X,\lambda)$ from $Y_-$ to $Y_+$, we have an algebra map $\phi$ from $\CC_*(Y_+,\lambda|_{Y_+})$ to $\CC_*(Y_-,\lambda|_{Y_-})$, which on generators is defined by
$$\phi(q_{\gamma})= \sum_{[\Gamma]}\# \overline{\cM}_{X}(\{\gamma\},\Gamma)\frac{1}{\mu_{\Gamma}\kappa_{\Gamma}}q^{\Gamma},$$
where $\Gamma$ is a collection of good orbits of $Y_-$. Here $\overline{\cM}_{X}(\Gamma^+,\Gamma^-)$ is the SFT compactification of the moduli space of rational holomorphic curves in the cobordism. The boundary configuration of $\overline{\cM}_{X}(\{\gamma\},\Gamma)$ with virtual dimension $1$ gives the relation $\partial\circ \phi = \phi \circ \partial$.

Even though contact homology is the simplest version of SFT, the analytical foundation to establish the counting of moduli spaces above as well as their algebraic relations is very involved and requires sophisticated virtual techniques going well beyond the classical approach of choosing $J$ carefully. The \emph{homology level} of contact homology was established by Pardon \cite{zbMATH07085531} using the implicit atlas and VFC \cite{zbMATH06578598} and by Bao--Honda \cite{zbMATH07656377} using Kuranishi perturbation theory. It is expected that they give the same contact homology. In this paper, we will use Pardon's construction and virtual techniques.

\begin{theorem}[\cite{zbMATH07085531}]
	The homology $\CHA_*(Y):=H_*(\CC_*(Y,\alpha))$ above realized in VFC is well defined and gives a monoidal functor from the symplectic cobordism category (with objects contact manifolds and morphisms exact cobordisms up to homotopy) to the category of (super)commutative algebras.
\end{theorem}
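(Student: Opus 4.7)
The plan is to realize the chain complex $(\CC(Y,\alpha), \partial_{\CH})$ via a coherent system of virtual fundamental classes on the SFT compactifications $\overline{\cM}_Y(\Gamma^+,\Gamma^-)$ and $\overline{\cM}_X(\Gamma^+,\Gamma^-)$, so that all the structural algebraic relations reduce to boundary-matching statements for their one-dimensional components.

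First I would fix a non-degenerate contact form and a cylindrical almost complex structure, and for each SFT moduli space attach an implicit atlas in the sense of Pardon. A chart in the atlas records a finite amount of thickening data, namely prescribed marked-point incidence conditions together with a finite-dimensional obstruction bundle perturbing the Cauchy--Riemann operator. SFT compactness and gluing, as established by Bourgeois--Eliashberg--Hofer--Wysocki--Zehnder \cite{zbMATH02062477}, then ensure that thickened moduli spaces are compact Hausdorff with corners stratified by multi-level holomorphic buildings. Pardon's machinery extracts a virtual fundamental chain with $\Q$-coefficients, and the counts $\#\overline{\cM}_Y(\{\gamma\},\Gamma)$ appearing in \eqref{eqn:partial} are interpreted as the degree of the resulting zero-dimensional VFC; the combinatorial factors $\mu_\Gamma\kappa_\Gamma$ arise from dividing by the orbifold automorphisms of the asymptotic-marker data.

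Once the counts are defined, the identity $\partial_{\CH}^2 = 0$ follows from analyzing the codimension-one boundary of a one-dimensional VFC: the only contributions come from two-level SFT buildings, and coherence of the atlas under gluing produces exactly the Leibniz-type pairing matching $\partial_{\CH}^2$. The chain map identity $\partial\circ\phi = \phi\circ\partial$ for a cobordism $X$ is obtained by the same argument on one-dimensional components of $\overline{\cM}_X$, whose codimension-one strata split as symplectization buildings on top or bottom of a cobordism curve. Invariance under the choice of contact form, almost complex structure, and homotopy class of $(X,\lambda)$ is proved by constructing parameterized VFCs over an interval of auxiliary data and running the analogous boundary analysis to produce explicit chain homotopies. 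Finally, the monoidal structure is immediate: for $Y = Y_1\sqcup Y_2$ every connected punctured curve lies in the symplectization of a single component, so the moduli spaces, atlases, and chain complexes all factor as tensor products.

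The hard part is the construction of the VFC itself. One needs implicit atlases on the full system of SFT moduli spaces that are simultaneously compatible with the boundary decompositions appearing in $\partial^2 = 0$, with the cobordism identities, and with the parameterized families used for invariance; this coherence is global and has to be built inductively on an action filtration so that the pullback of a thickening on a stratum agrees with the product of the thickenings on its factors. I would not reprove this from scratch. Instead my proposal is to invoke Pardon's coherent perturbation scheme in \cite{zbMATH07085531} as a black box, and then simply verify that the algebraic operations it outputs coincide with the $\partial_{\CH}$ and $\phi$ written above, from which well-definedness, functoriality, and monoidality all follow formally.
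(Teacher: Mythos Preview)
The paper does not prove this theorem at all; it is stated as a citation of Pardon \cite{zbMATH07085531} and used as a black box. Your proposal to invoke Pardon's coherent VFC construction as a black box is therefore exactly what the paper does, and your additional outline of what that black box contains (implicit atlases, SFT compactness and gluing, boundary analysis of one-dimensional VFCs, parameterized families for invariance) is a correct summary of the architecture of \cite{zbMATH07085531}, though strictly more than the paper itself supplies.
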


\begin{theorem}[\cite{zbMATH05709738}]\label{thm:AO}
    If $Y$ is overtwisted, then $\CHA_*(Y)=0$.
\end{theorem}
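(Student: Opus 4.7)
My plan is to reduce Theorem \ref{thm:AO} to the case of the standard overtwisted sphere, which is dispatched by Corollary \ref{cor:OT}, and then propagate the vanishing via contact connect sum.

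First I would identify the standard overtwisted sphere $(S^{2n-1},\xi_{\mathrm{ot}})$ in the formal class neutral under contact connect sum with the open book $\mathrm{OB}(T^*S^{n-1},\tau^{-1})$, where $\tau$ is the Dehn--Seidel twist along the zero section $L \simeq S^{n-1}$. This identification is due to Giroux in dimension three and Casals--Murphy--Presas \cite{zbMATH07010365} in higher dimensions. Since $[L]$ generates $H_{n-1}(T^*S^{n-1};\Q)$, Corollary \ref{cor:OT} directly yields $\CHA_*(S^{2n-1},\xi_{\mathrm{ot}}) = 0$.

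Next, by the Borman--Eliashberg--Murphy h-principle \cite{zbMATH06567662} (Eliashberg \cite{zbMATH04121041} in dimension three), any overtwisted $Y$ is contactomorphic to the contact connect sum $Y \# (S^{2n-1},\xi_{\mathrm{ot}})$. The relation $1 = 0$ in $\CHA_*(S^{2n-1},\xi_{\mathrm{ot}})$ is witnessed at the chain level by a chain $c$ with $\partial_{\CHA} c = 1$ supported in a small neighborhood of the binding of the open book, away from the attaching region of the Weinstein $1$-handle realizing the connect sum. Consequently $c$ persists in $\CC(Y \# (S^{2n-1},\xi_{\mathrm{ot}})) \cong \CC(Y)$, and gives $\CHA_*(Y) = 0$.

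The principal obstacle is controlling this chain-level witness under the connect sum within Pardon's virtual framework \cite{zbMATH07085531}: one must rule out spurious holomorphic curves crossing the neck of the subcritical $1$-handle which could cancel the relation $\partial c = 1$. I would handle this by a neck-stretching argument along the belt sphere of the $1$-handle (which has index $1<n$, so no holomorphic curve can cross it for generic virtual perturbation data), the standard analytic tool underlying propagation of algebraic overtwistedness across connect sums. A cleaner alternative, bypassing the connect sum reduction entirely, is to present an arbitrary overtwisted $Y$ directly as a $(+1)$-surgery along a two-component Legendrian link with nontrivial linking number (one component being a standard unknot in a Darboux chart) and apply the corollary of Theorem \ref{thm:main'}, as suggested in the introduction.
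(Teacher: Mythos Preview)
The paper does not prove Theorem \ref{thm:AO} at the place it is stated; it is quoted as a result of Bourgeois and van Koert \cite{zbMATH05709738}. The paper's own \emph{new} proof, outlined in the introduction, is precisely your ``cleaner alternative'': write any overtwisted $Y$ as $Y\#(S^{2n-1},\xi_{\mathrm{ot}})$, realize this as $(+1)$-surgeries on a two-component link $\Lambda\cup U$ with $U$ a Darboux unknot and nontrivial linking number, and invoke the corollary of Theorem \ref{thm:main'}. So on that route you are in complete agreement with the paper.

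Your first approach, however, contains a genuine gap. The sentence ``the belt sphere of the $1$-handle \ldots has index $1<n$, so no holomorphic curve can cross it for generic virtual perturbation data'' is not correct: the belt sphere of an index-$1$ Weinstein handle in a $(2n-1)$-manifold has dimension $2n-2$, i.e.\ codimension one, and holomorphic curves certainly may intersect it. There is no elementary dimension count that confines the chain-level witness $c$ with $\partial_{\CH}c=1$ to the $(S^{2n-1},\xi_{\mathrm{ot}})$ summand. Controlling the differential across a contact connected sum is exactly the content of Theorem \ref{thm:main'} and its proof (Propositions \ref{prop:corb} and \ref{prop:corb'}), which require either $c_1$-torsion hypotheses and index arguments, or the linking/area estimates of Proposition \ref{prop:area}; it is not a one-line neck-stretch. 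If you want to salvage the first approach, you should route it through Theorem \ref{thm:main'} rather than appeal to a nonexistent transversality obstruction---at which point it collapses into the second approach anyway.
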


\begin{remark}
    The vanishing of contact homology implies the vanishing of rational SFT as well as SFT on the homology level \cite{zbMATH05658836}. Along with \Cref{thm:AO}, this led Bourgeois and Niederkr{\"u}ger to define contact manifolds with vanishing contact homology to be algebraically overtwisted manifolds. As overtwisted manifolds sit at the bottom of the symplectic cobordism category by the work of Etnyre--Honda \cite{zbMATH01801587} and Eliashberg--Murphy \cite{zbMATH07600533}, algebraically overtwisted manifolds sit at the bottom of the symplectic cobordism category in an algebraic sense, after which there are hierarchies describing the increasing levels of tightness using SFT; see Latschev--Wendl's algebraic torsions \cite{zbMATH06000009} and Moreno and the author's hierarchy functors \cite{moreno2020landscape}.
\end{remark}

\subsection{Symplectic cohomology}
Before reviewing symplectic cohomology, we first recall the following theorem from \cite{bowden2022making}, which serves as a motivation for \Cref{thm:main,thm:main'}.
\begin{theorem}[{\cite[Theorem 4.1]{bowden2022making}}]\label{thm:flex}
Let $W_0$ be a $(2n+2)$-dimensional flexible Weinstein domain, and let $(M,\xi)$ be any contact manifold.
    \begin{enumerate}
        \item[(a)]  Suppose that $c_1(W_0)$ and $c_1(\xi)$ both vanish. If $W$ is a strong filling of $Y=M\# \partial W_0$, then the kernel of $H_n(\partial W_0;\Q)\to H_n(W_0;\Q)$ contains the kernel of $$H_n(\partial W_0;\Q) \stackrel{0\oplus \Id }{\hookrightarrow}  H_n(M;\Q) \oplus H_n(\partial W_0;\Q)=H_n(M\# \partial W_0;\Q)\rightarrow H_n(W;\Q).$$ 
        \item[(b)] Let $W_0$ be subcritical (\textbf{without} any assumption on $c_1=0$). Then, for any strong filling $W$ of $Y$ we have that $H_n(\partial W_0;\Q)\to H_n(W;\Q)$ is injective.
        \end{enumerate}
\end{theorem}

As a corollary of \Cref{thm:flex}, we have the following weaker form of \Cref{thm:main,thm:main'}.
\begin{corollary}\label{cor:nofilling}
    Let $Y_{\Lambda}$ be the contact manifold in \Cref{thm:main,thm:main'} from $(+1)$-surgeries. Then $Y_{\Lambda}$ has no strong filling.
\end{corollary}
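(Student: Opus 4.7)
The plan is to derive a contradiction by gluing a hypothetical strong filling of $Y_{\Lambda}$ to the $(+1)$-surgery cobordism $W_{\Lambda}$ and applying \Cref{thm:flex}. Suppose for contradiction that $Y_{\Lambda}$ admits a strong symplectic filling $F$. Since $W_{\Lambda}$ is a Weinstein (hence strong) cobordism from $Y_{\Lambda}$ to $Y = Y_1 \# Y_2$, the glued manifold $W := F \cup_{Y_{\Lambda}} W_{\Lambda}$ is a strong filling of $Y$.

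The first key observation is that $[\Lambda]$ vanishes in $H_{n-1}(W;\Q)$. Indeed, in the handle model $H_{a,b}$ recalled in Section 2, the disk $\{0\} \times D_b \subset D_a \times D_b$ is Lagrangian and its boundary is precisely the Legendrian sphere $S_b$, which is identified with $\Lambda \subset \partial_+ H_{a,b} \subset Y$. Hence $[\Lambda]$ bounds an embedded disk in $W_{\Lambda}$, and functoriality of the inclusion $W_{\Lambda} \hookrightarrow W$ gives $[\Lambda] = 0 \in H_{n-1}(W;\Q)$.

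On the other hand, \Cref{thm:flex} applied to the strong filling $W$ of $Y_1 \# Y_2$ forces $[\Lambda]$ to remain nonzero. The $c_1$-torsion hypothesis on $Y$ and the strong fillability of $Y_2$ match the assumptions of \Cref{thm:flex}. If $Y_1 = \partial(V\times \D)$, part (2) of \Cref{thm:flex} gives injectivity of $H_{n-1}(V;\Q) \to H_{n-1}(W;\Q)$; if $Y_1 = \partial W_0$ with $W_0$ a flexible Weinstein domain, the final statement of \Cref{thm:flex} yields that the kernel of $H_{n-1}(\partial W_0;\Q) \to H_{n-1}(W;\Q)$ is contained in the kernel of $H_{n-1}(\partial W_0;\Q) \to H_{n-1}(W_0;\Q)$. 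In either case, the hypothesis of \Cref{thm:main,thm:main'} that $[\Lambda]$ has nontrivial image in the natural Liouville/Weinstein filling of $Y_1$ forces $[\Lambda] \ne 0 \in H_{n-1}(W;\Q)$, contradicting the previous paragraph. The only point requiring care is to check that the degree $* = n-1$ lies in the range covered by the flexible part of \Cref{thm:flex} (which holds for $n \ge 3$) and that the $c_1$ hypotheses transfer correctly from $Y$ to its summands; both are routine given the standing assumptions.
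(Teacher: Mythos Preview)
Your proof is correct and follows exactly the same approach as the paper: glue a hypothetical strong filling to the surgery cobordism $W_\Lambda$ and observe that $\Lambda$ bounds the Lagrangian cocore disk there, contradicting \Cref{thm:flex}. The paper's proof is stated in two sentences; you have simply unpacked the details (the bounding disk, the case split between $V\times\D$ and flexible $W_0$, and the degree/$c_1$ checks) that the paper leaves implicit.
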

\begin{proof}
    If $Y_{\Lambda}$ has a strong filling, we then obtain a strong filling of the contact manifold in \Cref{thm:flex} by gluing with the surgery cobordism. Then the homology assumption on $\Lambda$ yields a contradiction with \Cref{thm:flex}. 
\end{proof}
The proof of \Cref{thm:flex} was based on the study of symplectic cohomology of strong fillings and was previously done in \cite{zbMATH07673358} for exact fillings. The strategy for \Cref{thm:main,thm:main'} is to extract the holomorphic curves (perturbed by a Hamiltonian) obstructing the filling in \Cref{cor:nofilling} and put them in the surgery cobordism; their degeneration will yield the vanishing of the contact homology of the concave boundary. 

\subsubsection{Basics of Symplectic cohomology}
Let $(W,\lambda)$ be an exact filling and $(\widehat{W},\widehat{\lambda})=W\cup (1,\infty)_r\times \partial W$ be the completion, where $\widehat{\lambda}=\lambda$ on $W$ and $\widehat{\lambda}=r\lambda|_{\partial W}$ on $(1,\infty)_r\times \partial W$. Note that $(0,\infty)_t\times \partial W = (1,\infty)_r\times \partial W$ for $r=e^t$. Let $H$ be a time-dependent Hamiltonian on the completion $(\widehat{W},\widehat{\lambda})$; the symplectic action for an orbit $\gamma$ is 
\begin{equation}\label{eqn:action}
\cA_H(\gamma)=-\int \gamma^*\widehat{\lambda}+\int_{S^1} (H\circ \gamma)\rd t.
\end{equation}
We say an almost complex structure $J$ is cylindrically convex near $\{r_0\} \times \partial W $ if and only if near the hypersurface $r=r_0$ we have that $\widehat{\lambda}\circ J =\rd r$ and $J$ is compatible with the symplectic form. We will consider a Hamiltonian $H$, which is a $C^2$ small perturbation (for the more precise meaning, see \eqref{ii} below) to the Hamiltonian that is $0$ on $W$ and linear with slope $a>0$ on $(1,\infty)_r\times \partial W$, such that $a$ is not a period of Reeb orbits of $R_{\lambda}$ for the contact form $\lambda$ on $\partial W$. We may assume the Hamiltonian is non-degenerate, where the Hamiltonian vector field is defined by $\rd \widehat{\lambda}(\cdot, X_{H})=\rd H$; then the periodic orbits consist of the following.
\begin{enumerate}[(i)]
	\item\label{i} Constant orbits on $W$ with $\cA_H\approx 0$.
	\item\label{ii} Non-constant orbits near Reeb orbits of $R_{\lambda}$ on $\{1\} \times \partial W$, with action close to the negative period of the Reeb orbits. In particular, we have $\cA_H\in (-a,0)$.  To see this, note that our $H$ is an $S^1$-dependent $C^2$ small perturbation to an autonomous Hamiltonian $h(r)$ with $h'(r)=a$ for $r>1+\epsilon$ with $\epsilon$ small and $h(r)=0$ for $r\le 1$. The non-constant orbits of $h(r)$ are in $S^1$ families like $(\gamma(h'(r_0)t),r_0)$, where $\gamma$ is a Reeb orbit of $R_{\lambda}$ with the period of $\gamma$ being $h'(r_0)$ for $1<r_0<1+\epsilon$. Therefore the symplectic action of such an orbit is
	$$-h'(r_0)r_0+h(r_0).$$
	It is clear that when $\epsilon\ll 1$, we have that the symplectic action is approximately the negative period of $\gamma$, which, in particular, is in $(-a,0)$. Then the $C^2$-small perturbation to $h(r)$ in \cite[Lemma 3.3]{MR2475400} will break the $S^1$ family orbits into two non-degenerate orbits with symplectic action arbitrarily close to the original $S^1$ family. We use $\check{\gamma},\hat{\gamma}$ to denote the two Hamiltonian orbits from a Reeb orbit $\gamma$; their difference is characterized by $\mu_{CZ}(\check{\gamma})=\mu_{CZ}(\gamma)$ and $\mu_{CZ}(\hat{\gamma})=\mu_{CZ}(\gamma)+1$. 
\end{enumerate} 
After fixing an $S^1$-dependent compatible almost complex structure $J$ that is cylindrically convex near a slice (i.e.\ a hypersurface $r=r_0$) where the Hamiltonian is linear with slope $a$, we can consider the compactified moduli space of Floer cylinders, i.e.\ solutions to $\partial_s u+J(\partial_t u-X_{H})=0$ modulo the $\R$ translation and asymptotic to two Hamiltonian orbits
$$\overline{\cM}_{H}(x,y)=\overline{\left\{u:\R_s\times S^1_t\to \widehat{W}\left|\partial_s u+J(\partial_t u-X_{H})=0, \lim_{s\to \infty} u(s,\cdot)=x, \lim_{s\to -\infty} u(s,\cdot)=y \right.\right\}/\R}.$$
With a generic choice of $J$, the count of rigid Floer cylinders defines a cochain complex $C^*(H)$, which is a $\Q$-vector space generated by Hamiltonian orbits. The differential $\delta$ is defined as
$$\delta(x)=\sum_{y,\dim \cM_{x,y}=0} \# \overline{\cM}_H(x,y)\cdot y.$$
The orbits of type \eqref{i} form a subcomplex $C^*_0(H)$, whose cohomology is $H^*(W;\Q)$. The orbits of type \eqref{ii} form a quotient complex $C^*_+(H)$. The cochain complexes are graded by $n-\mu_{CZ}$, which is in general a $\Z/2$ grading unless we choose a trivialization of $\det_{\C}TW$, and a $\Q$-grading if we choose a trivialization of $\det_{\C}\oplus^N TW$ for some $N\in \N_+$. Given two Hamiltonians $H_a,H_b$ with slopes $a<b$, we can consider a non-increasing homotopy of Hamiltonians $H_s$ from $H_b$ to $H_a$, i.e.\ $H_s=H_b$ for $s\ll0$ and $H_s=H_a$ for $s\gg 0$. Then the count of rigid solutions to the parameterized Floer's equation $\partial_s u+J(\partial_t u-X_{H_s})=0$ defines a continuation map $C^*(H_a)\to C^*(H_b)$, which is compatible with splitting into zero and positive complexes. Then the (positive) symplectic cohomology of $W$ is defined as
$$SH^*(W):=\lim_{a\to \infty} H^*(C^*(H_a)),\quad SH_+^*(W):=\lim_{a\to \infty} H^*(C^*_+(H_a)),$$
which fit into a tautological exact sequence,
$$\ldots \to H^*(W)\to SH^*(W)\to SH^*_+(W)\to H^{*+1}(W)\to \ldots $$
We define the filtered symplectic cohomology $SH^*_{<a}(W)$ by $H^*(C^*(H_a))$ and $SH^*_{+,<a}(W)$ by $H^*(C^*_+(H_a))$, which are independent of $H_a$ as long as $a$ is not a period of Reeb orbits on $\partial W$, see \cite[Proposition 2.8]{zhou2020mathbb}. The filtered symplectic cohomology depends on the Liouville form on the boundary, but we will suppress the notation of dependence as the specific contact form will be clear in the proofs.

One can also use autonomous Hamiltonians for symplectic cohomology, where Hamiltonian orbits come in $S^1$ families. The setup for symplectic cohomology requires choosing auxiliary Morse functions on the image of the Hamiltonian orbits to build the cascades model \cite{MR2475400}; see also \cite[\S 2.2]{zbMATH07673358}. Each Reeb orbit $\gamma$ gives rise to two generators of the cochain complex again, also denoted by $\check{\gamma},\hat{\gamma}$ corresponding to the minimum and maximum of the auxiliary Morse function on $S^1$. We use $\overline{\gamma}$ to represent an unspecified (check or hat) generator from a Reeb orbit $\gamma$.

We define 
$$\delta_{\partial}:SH_+^{*}(W)\to H^{*+1}(W;\Q)\to H^{*+1}(\partial W;\Q).$$
For any closed submanifold $S\subset \partial W$, we can define a map $C_+(H)\to \Q$ by counting the compactified moduli space  $\overline{\cM}_H(x,S)$
$$\overline{\left\{ u:\C\to \widehat{W}\left|\partial_su+J(\partial_tu-X_H)=0, \lim_{s\to \infty} u = x, u(0)\in \{1-\eta\} \times S \subset W  \right.\right\}/\R}$$
for $0<\eta \ll 1$. Here $H$ is assumed to be zero below the level of $r=1-\eta$. Such a map is clearly a cochain map, and, on the cohomology level, it is the same as the map $\la \delta_{\partial}(\cdot),[S]\ra:SH_+^*(W)\to \Q$, where $\la \cdot, \cdot\ra$ is the Kronecker pairing $H^*(W;\Q)\otimes H_*(W;\Q)\to \Q$. 

\subsubsection{Viterbo transfer}
Let $(V,\lambda_V)\subset (W,\lambda_W)$ be an exact subdomain, i.e.\ $\lambda_W|_V=\lambda_V$, then there exists $\epsilon>0$, such that $V_{\epsilon}:=V\cup  (1,1+\epsilon]_r\times \partial V \subset W$ with $\lambda_W|_{V_{\epsilon}}=\widehat{\lambda}_V$. Then we can consider a Hamiltonian $H_{VT}$ on $\widehat{W}$ as a $C^2$-small non-degenerate perturbation to the following.
\begin{enumerate}
	\item $H_{VT}$ is $0$ on $V$.
	\item $H_{VT}$ is linear with slope $B$ on $[1,1+\epsilon]_r\times \partial V$, such that $B$ is not the period of a Reeb orbit on $\partial V$.
	\item $H_{VT}$ is $B\epsilon$ on $W\backslash V_{\epsilon}$.
	\item $H_{VT}$ is linear with slope $A\le B\epsilon$ on $[1,\infty)_r\times \partial W$, such that $A$ is not the period of a Reeb orbit on $\partial W$.  
\end{enumerate}
Then there are five classes of periodic orbits of $X_{H_{VT}}$.
\begin{enumerate}[(I)]
	\item\label{I} Constant orbits on $V$ with $\cA_{H_{VT}}\approx 0$.
	\item\label{II} Non-constant periodic orbits near $\partial V$ with $\cA_{H_{VT}} \in (-B,0)$.
	\item\label{III} Non-constant periodic orbits near $\{1+\epsilon\}\times \partial V$ with action $\cA_{H_{VT}}\in (B\epsilon-(1+\epsilon)B, B\epsilon)=(-B,B\epsilon)$. To see the action region, it is similar to \eqref{ii} after \eqref{eqn:action}. Those orbits are close to Reeb orbits of $(\partial W, \lambda|_{\partial W})$, but placed near $r=1+\epsilon$. Therefore $-\int \gamma^*\widehat{\lambda}$ in \eqref{eqn:action} is close to $(1+\epsilon)$  times (as $\widehat{\lambda}|_{r=1+\epsilon}=(1+\epsilon)\lambda|_{\partial W}$) the period of the Reeb orbits and $\int_{S^1} (H_{VT}\circ \gamma) \rd t$ is close to $B\epsilon$. Hence the claim follows.
	\item\label{IV} Constant orbits on $W\backslash V_{\epsilon}$ with $\cA_{H_{VT}}\approx B\epsilon$.
	\item\label{V} Non-constant periodic orbits near $\partial W$ with $\cA_{H_{VT}}\in (B\epsilon-A, B\epsilon)$
\end{enumerate}
In particular, when $B\epsilon \ge A$, the quotient complex generated by orbits with non-positive action is generated by type \eqref{I}, \eqref{II} orbits along with some of the type \eqref{III} orbits. However, there is no Floer cylinder from a type \eqref{III} orbit to a type \eqref{I} or \eqref{II} orbit \cite[Figure 6]{cieliebak2018symplectic}. This follows from the asymptotic behavior lemma \cite[Lemma 2.3]{cieliebak2018symplectic} and the integrated maximum principle \cite[Lemma 2.2]{cieliebak2018symplectic}. Therefore orbits of the form \eqref{I}, \eqref{II} form a quotient complex when $B\epsilon\ge A$. Let $H_V$ denote the Hamiltonian on $\widehat{V}$ which is the linear extension of the truncation of $H_{VT}$ on $V_{\epsilon}$. Then by \cite[Lemma 2.2]{cieliebak2018symplectic}, the quotient complex is identified with $C^*(H_V)$. Those two applications of the integrated maximum principle are where the exactness of the cobordism $W\backslash V$ is crucial. Next, we consider a Hamiltonian $H_W$ on $\widehat{W}$ which is a $C^2$ small perturbation to the function that is zero on $W$ and linear with slope $A$ on $ (1,\infty)\times \partial W$. Then $H_W\le H_{VT}$ and we can find non-increasing homotopy from $H_{VT}$ to $H_W$, which defines a continuation map. Therefore we have a map 
$$C^*(H_W)\stackrel{\text{continuation}}{\longrightarrow} C^*(H_{VT}) \stackrel{\text{quotient}}{\longrightarrow} C^*(H_V).$$
Since the continuation map increases the symplectic action, the above map respects the splitting into $C_0,C_+$. Taking the direct limit for $B$ yields the Viterbo transfer map which is compatible with the tautological exact sequence,
$$
\xymatrix{\ldots \ar[r] & H^*(W;\Q) \ar[r] \ar[d] & SH^*_{<A}(W) \ar[r] \ar[d] & SH_{+,<A}^*(W) \ar[r]\ar[d] & H^{*+1}(W;\Q) \ar[r]\ar[d] & \ldots \\
	\ldots \ar[r] & H^*(V;\Q) \ar[r] & SH^*_{<B}(V) \ar[r] & SH_{+,<B}^*(V) \ar[r] & H^{*+1}(V;\Q) \ar[r] &  \ldots
 }
$$
If we also take the direct limit for $B$ first then for $A$, we get the Viterbo transfer map for the full symplectic cohomology.

\subsubsection{Symplectic cohomology of contact manifolds with DGA augmentations}\label{ss:aug}
Let $(W,\lambda)$ be an exact filling of $(Y,\alpha=\lambda|_Y)$, i.e.\ an exact cobordism from $\emptyset$ to $Y$, then the functorial package of contact homology gives rise to a differential graded algebra (DGA) morphism
$$\epsilon_W:\CC_*(Y,\alpha)\to \Q,$$
i.e.\ a DGA augmentation. On the other hand, given a DGA augmentation $\epsilon$, one can define the positive symplectic cohomology of $\epsilon$ as follows. Let $H$ be a Hamiltonian on $\widehat{Y}=\R_t\times Y \simeq (\R_+)_r\times Y$, where $r=e^t$, such that $H$ is $0$ on $(0,1]_r\times Y$ and is the same as the Hamiltonian on the cylindrical end (with slope $a$) in the definition of symplectic cohomology. Then we can consider the following compactified moduli space $\overline{\cM}_{Y,H}(x,y,\Gamma)$ for $\Gamma=\{\gamma_1,\ldots,\gamma_k\}$ a multiset of good Reeb orbits and $x,y$ \emph{non-constant} Hamiltonian orbits,
$$\overline{\left\{u:\R_s\times S^1_t\backslash \{p_1,\ldots,p_k\} \to \widehat{Y}\left|\partial_s u+J(\partial_t u-X_{H})=0, \lim_{s\to \infty} u(s,\cdot)=x, \lim_{s\to -\infty} u(s,\cdot)=y, \lim_{p_i} u= \gamma_i \right.\right\}/\R}.$$
Here $\lim_{p_i} u= \gamma_i$ is a short-hand for $u$ being asymptotic to $\gamma_i$ at $p_i$ viewed as a negative puncture with a free asymptotic marker, where the Hamiltonian is zero and the equation is the usual Cauchy-Riemann equation. Such compactification will be a mixture of Floer-type breaking at non-constant Hamiltonian orbits (it cannot break at a constant orbit of $H$ by symplectic action reasons) and SFT building breaking at the lower level. Then we define a differential $\delta_{\epsilon}$ on $C_+(H)$ as follows:
\begin{equation}\label{eqn:diff}
    \delta_{\epsilon}(x) = \sum_{[\Gamma]}  \frac{1}{\mu_{\Gamma}\kappa_{\Gamma}}\#\overline{\cM}_{Y,H}(x,y,\Gamma)\prod_{\gamma\in \Gamma}\epsilon(q_{\gamma}) \cdot y.
\end{equation}
The boundary configuration of $\overline{\cM}_{Y,H}(x,y,\Gamma)$ of virtual dimension $1$ gives that $\delta_{\epsilon}^2=0$. Similarly, by considering the compactified moduli space $\overline{\cM}_{Y,H}(x,C,\Gamma)$ for a singular chain $C\subset Y$ as follows, 
$$\overline{\left\{u:\C \backslash \{p_1,\ldots,p_k\} \to \widehat{Y}=\R_+\times Y\left|
\begin{array}{c}
\partial_s u+J(\partial_t u-X_{H})=0, p_i\ne 0,\\
\displaystyle \lim_{s\to \infty} u(s,\cdot)=x, u(0)\in \{ 1-\eta\}\times C, \lim_{p_i} u= \gamma_i
\end{array}\right.\right\}/\R},$$
for a fixed $0<\eta\ll 1$, we define $\delta_{\partial,\epsilon}:C^*_+(H)\to C^{*+1}(Y)$ by 
$$\delta_{\partial,\epsilon}(x)(C) =  \sum_{[\Gamma]} \frac{1}{\mu_{\Gamma}\kappa_{\Gamma}}\#\overline{\cM}_{Y,H}(x,C,\Gamma)\prod_{\gamma\in \Gamma}\epsilon(q_{\gamma}).$$
This is a cochain map. In actual constructions, to avoid working with the infinite-dimensional model $C^*(Y)$ (to avoid shrinking the space of admissible choices in the chosen virtual technique too much), one can use a finite-dimensional model for $Y$, for example, a simplicial complex from a triangulation or a Morse complex using an auxiliary Morse function. Moreover, if one only wants to understand the effect of $\delta_{\partial,\epsilon}$ on a homology class of $Y$ under the pairing $H^*(Y;\Q)\otimes H_*(Y;\Q)\to \Q$, one may take a singular chain or a submanifold representing the homology class. 

We use $SH^*_+(Y,\alpha,\epsilon)$ to denote the direct limit of the cohomology of $(C^*_+(H),\delta_{\epsilon})$ through the continuation map of increasing slopes (corrected by the augmentation in the same way as \eqref{eqn:diff}), and $\delta_{\partial,\epsilon}$ to denote the map $SH^*_+(Y,\alpha,\epsilon)\to H^{*+1}(Y;\Q)$. We summarize the status of such objects and their properties in the following.
\begin{enumerate}
    \item\label{1} The analytical foundation of such symplectic cohomology is established by Pardon's work on Hamiltonian-Floer cohomology \cite{zbMATH06578598} and contact homology \cite{zbMATH07085531}. But strictly speaking, as $\CC_*(Y,\alpha)$ as well as the counting of $\overline{\cM}_H(x,y,\Gamma)$ for $\Gamma=\{\gamma_1,\ldots,\gamma_k\}$ depend on auxiliary choices, i.e.\ choices in \cite[\S 4.8]{zbMATH07085531} (in addition to the choice of contact forms, and almost complex structures), the symplectic cohomology also depends on such choices a priori. That is the most rigorous notation should be 
    $$SH_+^*(Y,\alpha,J,\theta,\epsilon)$$
    where 
    \begin{enumerate}
        \item $\theta$ is a choice of auxiliary data from a set analogous to $\Theta$ in \cite[\S 4.8]{zbMATH07085531}. Moreover, $\theta$ extends the auxiliary choice $\theta_0$ needed in \cite[\S 4.8]{zbMATH07085531} to define the contact DGA $\CC_*(Y,\alpha)$ using the contact form $\alpha$ and the $\R$-invariant almost complex structure that is the same as $J$ on the negative end of $\widehat{Y}$ used for $\overline{\cM}_{Y,H}(x,y,\Gamma)$.
        \item $\epsilon$ is an augmentation of $\CC_*(Y,\alpha)$, which depends on $\alpha$, the almost complex structure as well as $\theta_0$.
    \end{enumerate}
    An alternative perspective is viewing $C^{-*}_+(H)\otimes \CC_*(Y,\alpha)$ as a DGA module over $\CC_*(Y,\alpha)$ with differential from counting $\overline{\cM}_{Y,H}(x,y,\Gamma)$ and the Leibniz rule. Then the symplectic cohomology is the tensor product with $\Q$, if we view $\Q$ as a $\CC_*(Y,\alpha)$ module from the augmentation $\epsilon$, see \S \ref{ss:43} for more details on this perspective.

    It is expected that such symplectic cohomology is independent of contact forms, almost complex structures, $\theta$, as well as augmentations up to DGA homotopies. As a consequence of this expectation, the set of all such symplectic cohomologies is a contact invariant. The usual invariance proof for symplectic cohomology/Hamiltonian-Floer cohomology can only show the invariance for extensions $\theta$ of $\theta_0$ and choices of Hamiltonians. Different choices of $\alpha,J$ and $\theta_0$ were shown to give rise to contact DGAs that are chain homotopic to each other \cite{zbMATH07085531}. On the other hand, the symplectic cohomology here is similar to the linearized contact homology. In particular, one would like to have DGA homotopies between them, which would allow us to define the homotopy classes of augmentations (independent of $\alpha,J,\theta_0$) and prove the invariance of $SH_+^*(Y,\alpha,J,\theta,\epsilon)$.
    \item When $\epsilon$ is $\epsilon_{W}$ from an exact filling (or a strong filling and we use the Novikov field coefficient), $SH^*_+(Y,\alpha,\epsilon_W)\to H^{*+1}(Y)$ is isomorphic to $SH^*_+(W)\to H^{*+1}(W)\to H^{*+1}(Y)$ for any choice of $\alpha$ and auxiliary choices in the VFC. The proof is no different from the functoriality of compositions for contact homology by neck-stretching in \cite[\S 1.5]{zbMATH07085531}. This perspective of symplectic cohomology was introduced by Bourgeois and Oancea \cite{MR2471597}, where they proved the equivalence between positive $S^1$-equivariant symplectic cohomology and linearized contact homology w.r.t.\ the augmentation from the filling. 
    \item Given an exact cobordism $(X,\lambda)$ from $Y_-$ to $Y_+$, we can consider a Hamiltonian $H_+$ on $\widehat{X}$ which is linear on the positive cylindrical end of $\widehat{X}$ and zero everywhere else. Given an augmentation $\epsilon$ of $Y_-$, one can define similarly the symplectic cohomology $SH_+^*(X,\lambda,\epsilon)$ with additional choices in the VFC as in \eqref{1} above. Similar to \eqref{1}, this cohomology depends a priori on the contact form, almost complex structure, as well as auxiliary data $\theta_0$ for the concave boundary.  $SH_+^*(Y,\alpha,\epsilon)$ may be viewed as the $SH_+^*(Y\times[0,1]_t,e^t\alpha,\epsilon)$.
    
    We consider a Hamiltonian $H_{VT}$ on $\widehat{X}$ that is similar to the Hamiltonian in the construction of the Viterbo transfer map; following the same recipe as the Viterbo transfer map, along with the correction from the augmentation $\epsilon$, we get a Viterbo transfer map
    $$SH_+^{*}(X,\lambda,\epsilon)\to SH_+^*(Y_-,\lambda|_{Y_-},\epsilon).$$
    By neck-stretching along $Y_+ \subset \widehat{X}$, we get an isomorphism 
    $$SH_+^*(X,\lambda,\epsilon)\simeq SH_+^*(Y_+,\lambda|_{Y_+},\epsilon\circ \phi_X),$$
    where $\phi_X$ is the DGA morphism from the cobordism $X$; hence $\epsilon\circ \phi_X$ is a DGA augmentation of $\CC(Y_+,\lambda|_{Y_+})$. 
    \item Similar to $\delta_{\partial}:SH^*_+(Y,\alpha,\epsilon)\to H^{*+1}(Y)$, we can define a map $SH^*_+(W,\lambda,\epsilon)\to H^{*+1}(W)$. Moreover, there exist auxiliary choices making the following diagram (Viterbo transfer) commute: $$
    \xymatrix{
    SH_{+}^*(Y_+,\alpha_+,\epsilon\circ\phi_{W})\ar[d]^{\simeq}  & \\
SH_{+}^*(W,\lambda,\epsilon)\ar[r]^{\phi_{Viterbo} }\ar[d] & SH_{+}^*(Y_-, \alpha_-, \epsilon_)\ar[d]^{\delta_{\partial}}\\
H^{*+1}(W) \ar[r] & H^{*+1}(Y_-)
}
$$
where $\alpha_{\pm}=\lambda|_{Y_{\pm}}$.
\end{enumerate}
We do not use the well-definedness (independence of various choices) of such cohomology in this paper, but rather the existence of one construction guaranteed by Pardon's VFC. The idea of the above Viterbo transfer maps with augmentations is used in the contact connected sum for \Cref{thm:main'}, but only in a very special form, where the augmentation is trivial. We also only require that there exist auxiliary choices making one construction of the Viberbo transfer. The key feature relating those abstract existences with geometric applications is the fact that the virtual count of a compactified moduli space equals the geometric count if the moduli space is cut out transversely, e.g.\ \cite[Proposition 4.33]{zbMATH07085531}.

\begin{remark}
One can modify the proof of \Cref{thm:flex} using symplectic cohomology of augmentations to prove that $Y_{\Lambda}$ in \Cref{thm:main,thm:main'} has no DGA augmentations in a similar way to \Cref{cor:nofilling}. 
\end{remark}
\section{Vanishing of contact homology}
In \S \ref{ss:41}, we establish the existence of curves hitting $\Lambda$ for contact manifolds in \Cref{thm:main} and prove \Cref{thm:main}. In \S \ref{ss:42}, we recall properties of the connected sum and prove \Cref{thm:main'}. In \S \ref{ss:43}, we give an explanation of our results from the functorial aspects of SFT. Finally in \S \ref{ss:44}, we explain \Cref{prop:exotic}.

\subsection{Holomorphic curves hitting $\Lambda$}\label{ss:41}
\subsubsection{Case 1: $Y=\partial(V\times \D)$} \label{ss:411}
 Let $f$ be a Morse function on $V$ such that $\partial_r f>0$ near the boundary, where $r$ is the collar coordinate. Following \cite[\S 2.1]{zbMATH07673358}, we can endow $Y$ with a contact form such that:
\begin{enumerate}
	\item Each critical point $p$ of $f$ corresponds to a simple non-degenerate Reeb orbit $\gamma_p$, which is the circle over $p$ in the region $V\times S^1\subset \partial(V\times \D)$;
	\item Using the natural bounding disk from the $\D$-component, the Conley-Zehnder index of $\gamma_p$ is given by $2+\dim_{\C}{V}-\ind(p)$, hence the SFT degree is positive;
	\item The period of $\gamma_p$ is approximately $1$, and $\gamma_p$ has longer period than $\gamma_q$ if and only if $f(p)<f(q)$. 
\end{enumerate}	
The orbits $\{\gamma_p\}$ from critical points are all the Reeb orbits with period at most $2$. We may assume orbits of period at most $8$ are covers of $\gamma_p$, whose SFT degrees are positive if $c_1(V)$ is torsion. If we use a Hamiltonian $H$ with slope $2$ that vanishes on the filling, then we have two non-constant Hamiltonian orbits $\check{\gamma}_p,\hat{\gamma}_p$ from each Reeb orbit $\gamma_p$, where $\mu_{CZ}(\check{\gamma}_p)=\mu_{CZ}(\gamma_p)$, $\mu_{CZ}(\hat{\gamma}_p)=\mu_{CZ}(\gamma_p)+1$. Moreover, we have the following properties \cite[Proposition 2.6]{zbMATH07673358}.
\begin{enumerate}
\item We have $SH^{*}_{+,<2}(V\times \D) \simeq H^*(V;\Q)[1]\oplus H^*(V;\Q)[2]$, where the $H^*(V;\Q)[1]$ component is generated by $\check{\gamma}_p$ and the $H^*(V;\Q)[2]$ component is generated by $\hat{\gamma}_p$.
\item $\delta_{\partial}$ restricted to $H^*(V;\Q)[1]$ is injective and is an isomorphism when restricted to $V\times \{\mathrm{pt}\}\subset V\times S^1\subset  \partial(V\times \D)$. $\delta_{\partial}$ is zero on the $H^*(V;\Q)[2]$ component. 
\end{enumerate}	
In particular, if $[\Lambda]$ is not in the kernel of $H_*(\partial(V\times \D);\Q)\to H_*(V\times \D;\Q)$, then we can find a linear combination of (check) orbits $x=\sum a_i\overline{\alpha}_i$, such that $x$ is closed and $\langle \delta_{\partial}(x), [\Lambda]\rangle\ne 0$, where $\la \cdot,\cdot\ra$ is the pairing $H^*(\partial W;\Q)\otimes H_*(\partial W;\Q)\to \Q$. For simplicity, we assume $x$ is a simple orbit $\overline{\alpha}$ with coefficient $1$; otherwise, the argument in the general case only differs in notation.  As a consequence, for generic choices of almost complex structures, we have 
\begin{enumerate}
	\item $\#\overline{\cM}_H(\overline{\alpha},\overline{\beta})=0$ for any other non-constant Hamiltonian orbit $\overline{\beta}$;
	\item $\#\overline{\cM}_H(\overline{\alpha},\Lambda) \ne 0$,
\end{enumerate}	 
where by counting the number of points, we implicitly require that the virtual dimensions are zero. The Conley-Zehnder index of $\overline{\alpha}$ using the natural bounding disk is $2$.

\subsubsection{Case 2: $Y=\partial W$ for a flexible $W$ with $c_1(W)$ torsion}
By the work of Lazarev \cite{zbMATH07184228}, there exists a contact form $\alpha$ on $\partial W$ and a positive real number $D$, such that all Reeb orbits of period smaller than $D$ have the following properties.
\begin{enumerate}
	\item They are non-degenerate and have Conley-Zehnder indices at least $1$, where the Conley-Zehnder indices are computed using a trivialization of $\det_{\C} \oplus^N\xi$ for some $N\in \N$ as $c_1(W)$ is torsion.
	\item $SH^*_{+,<D}(W)\to H^{*+1}(W;\Q)$ is surjective. 
\end{enumerate}	
We may rescale $\alpha$ such that $D=2$. Let $H$ be a Hamiltonian with slope $2$ that vanishes on the filling. If $[\Lambda]$ is not in the kernel of $H_*(\partial W;\Q)\to H_*(W;\Q)$, we can find a linear combination of Hamiltonian orbits (check and hat orbits from Reeb orbits), which we assume to be a single orbit $\overline{\alpha}$ as before, whose Conley-Zehnder index is $2$, such that $\overline{\alpha}$ represents a closed cochain in the positive cochain complex and $\la \delta_{\partial}(\overline{\alpha}),[\Lambda]\ra \ne 0$. Hence for generic choices of almost complex structures, we also have
\begin{enumerate}
	\item $\#\overline{\cM}_H(\overline{\alpha},\overline{\beta})=0$, for any other non-constant Hamiltonian orbit $\overline{\beta}$;
	\item $\#\overline{\cM}_H(\overline{\alpha},\Lambda) \ne 0$.
\end{enumerate}	 
With the notions above, we consider those curves in the symplectization via neck stretching.
\begin{proposition}\label{prop:curve}
Let $Y$ be $\partial(V\times \D)$ or flexible fillable with the additional assumption that $c_1(Y)$ is torsion. Let $H$ be the Hamiltonian considered above but viewed as defined on $\widehat{Y}$. Then we have the following:
\begin{enumerate}
    \item\label{m2} $\#\overline{\cM}_{Y,H}(\overline{\alpha},\overline{\beta})=0$ for those moduli spaces with expected dimension $0$, where $\overline{\beta}$ is a non-constant Hamiltonian orbit. 
    \item\label{m1} $\overline{\cM}_{Y,H}(\overline{\alpha},\overline{\beta},\Gamma)=\emptyset$ for $\Gamma\ne \emptyset$. 
    \item\label{m3} $\overline{\cM}_{Y,H}(\overline{\alpha},\Lambda,\Gamma)=\emptyset$ for $\Gamma \ne \emptyset$ and $\#\overline{\cM}_{Y,H}(\overline{\alpha},\Lambda) \ne 0$.
\end{enumerate}
\end{proposition}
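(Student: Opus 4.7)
The plan is to relate the moduli spaces in $\widehat{Y}$ appearing in the proposition to the already-understood moduli spaces $\overline{\cM}_H(\overline{\alpha},\overline{\beta})$ and $\overline{\cM}_H(\overline{\alpha},\Lambda)$ inside the completed filling $\widehat{W}$ via neck-stretching along (a collar slice of) $Y=\partial W$.

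First, I would stretch the neck along a hypersurface parallel to and slightly inside $Y$ in $\widehat{W}$ (below the slice $\{1-\eta\}\times \Lambda$ in the $\Lambda$ case), using a family of admissible almost complex structures. Applying SFT compactness to sequences of curves in $\overline{\cM}_H(\overline{\alpha},\overline{\beta})$ and $\overline{\cM}_H(\overline{\alpha},\Lambda)$, each sequence limits to a broken building whose top Floer level in $\widehat{Y}$ is a curve in $\overline{\cM}_{Y,H}(\overline{\alpha},\overline{\beta},\Gamma)$ or $\overline{\cM}_{Y,H}(\overline{\alpha},\Lambda,\Gamma)$ for some multiset $\Gamma$ of Reeb orbits, with intermediate SFT cascade levels in $\R\times Y$ and a bottom SFT level in $\widehat{W}$ whose virtual count packages into the DGA augmentation $\epsilon_W\colon \CC(Y)\to \Q$ from the filling. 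Within Pardon's VFC this yields the master identity
\begin{equation*}
\#\overline{\cM}_H(\overline{\alpha},\overline{\beta})=\sum_{[\Gamma]}\tfrac{1}{\mu_\Gamma\kappa_\Gamma}\,\#\overline{\cM}_{Y,H}(\overline{\alpha},\overline{\beta},\Gamma)\,\epsilon_W(q^\Gamma),
\end{equation*}
together with its analogue with $\overline{\beta}$ replaced by $\Lambda$.

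Next, the action identity \eqref{eqn:action} combined with Floer energy non-negativity forces every $\gamma\in \Gamma$ to have period less than $\cA_H(\overline{\alpha})-\cA_H(\overline{\beta})<2$, so $\Gamma$ consists only of short Reeb orbits. In Case 1 these are the $\gamma_p$'s with SFT degree $|q_{\gamma_p}|=2n-2-\ind(p)$; since $V$ has non-empty boundary, one may pick the auxiliary Morse function $f$ without top-index critical points, giving $|q_{\gamma_p}|\geq 1$. In Case 2, Lazarev's theorem yields $\mu_{CZ}(\gamma)\geq 1$, so $|q_\gamma|\geq n-2$; this is positive for $n\geq 3$, while the residual case $n=2$ reduces to Case 1 since flexible Weinstein $4$-domains are subcritical. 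Since the augmentation plane with positive puncture at $\gamma$ has virtual dimension equal to $|q_\gamma|$, one concludes $\epsilon_W(q_\gamma)=0$ whenever $|q_\gamma|>0$, so only the $\Gamma=\emptyset$ term survives. This gives $\#\overline{\cM}_{Y,H}(\overline{\alpha},\overline{\beta})=\#\overline{\cM}_H(\overline{\alpha},\overline{\beta})=0$ for part (1), and $\#\overline{\cM}_{Y,H}(\overline{\alpha},\Lambda)=\#\overline{\cM}_H(\overline{\alpha},\Lambda)\neq 0$ for the nontrivial half of part (3).

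For the emptiness statements in part (2) and the $\Gamma\neq\emptyset$ half of part (3), I would invoke the virtual dimension formula: attaching an additional negative Reeb puncture at $\gamma$ decreases the virtual dimension of the corresponding $\widehat{Y}$-moduli by $|q_\gamma|\geq 1$, so any $\Gamma\neq \emptyset$ configuration has strictly negative virtual dimension relative to the zero-dimensional $\Gamma=\emptyset$ moduli, and emptiness follows from generic transversality in the VFC. The chief obstacle I anticipate is executing this neck-stretching correspondence and the transversality argument coherently inside Pardon's implicit atlas framework, which requires compatible perturbation data across the top Floer level, the intermediate symplectization cascades, and the bottom SFT level in $\widehat{W}$, together with careful bookkeeping of the placement of the stretching hypersurface relative to the $\Lambda$ intersection site.
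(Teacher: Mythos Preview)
Your neck-stretching strategy and the master identity relating the $\widehat{W}$-counts to the $\widehat{Y}$-counts via the augmentation $\epsilon_W$ are exactly the paper's approach, and your treatment of Case~2 (flexible $W$, $c_1$ torsion) is correct in spirit. Two points need attention.

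First, and more seriously, in Case~1 ($Y=\partial(V\times\D)$) there is \emph{no} hypothesis that $c_1(V)$ is torsion. The Conley--Zehnder indices and SFT degrees you compute are only defined relative to the bounding-disk trivialization, and the actual virtual dimension of a punctured curve in $\widehat{Y}$ (or of an augmentation plane in $W$) can differ from your formula by a term $2\langle c_1,[A]\rangle$. So neither ``$\epsilon_W(q_\gamma)=0$ because $|q_\gamma|>0$'' nor ``negative virtual dimension implies empty'' is justified as stated. The paper handles Case~1 by action/energy instead (deferring to Propositions~2.7 and~3.2 of \cite{zbMATH07673358}): the energy identity gives
\[
\sum_{\gamma\in\Gamma}\int\gamma^*\alpha \;\le\; \cA_H(\overline{\beta})-\cA_H(\overline{\alpha})
\]
(note the sign is opposite to what you wrote), and since in Case~1 all short Reeb orbits have period $\approx 1$ while the right-hand side is the difference of two such periods, there is no room for extra punctures; this forces $\Gamma=\emptyset$ directly for part~(2). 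The $\Lambda$-moduli in part~(3) require additional structure specific to the $V\times\D$ geometry from the cited reference, not a bare index count. The paper makes this dichotomy explicit later (proof of Proposition~4.4): ``by degree reasons when $Y$ is the boundary of a flexible domain \ldots\ and by action reasons when $Y=\partial(V\times\D)$.''

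Second, in Case~2 your argument for part~(2) is incomplete: saying each extra puncture drops the dimension by $|q_\gamma|\ge 1$ only shows the $\Gamma\ne\emptyset$ moduli has smaller dimension than the $\Gamma=\emptyset$ one, but the statement claims emptiness for \emph{every} non-constant $\overline{\beta}$, not just those where the $\Gamma=\emptyset$ moduli is zero-dimensional. You need the additional input $\mu_{CZ}(\overline{\beta})\ge 1$ from Lazarev, so that already $\mu_{CZ}(\overline{\alpha})-\mu_{CZ}(\overline{\beta})-1\le 0$; together with $|q_\gamma|\ge 1$ this gives the paper's displayed inequality
\[
\mu_{CZ}(\overline{\alpha})-\mu_{CZ}(\overline{\beta})-1-\sum_{\gamma\in\Gamma}(\mu_{CZ}(\gamma)+n-3)<0.
\]
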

\begin{proof}
When $Y=\partial(V\times \D)$, this follows from the proof of \cite[Propositions 2.7, 3.2]{zbMATH07673358} by neck-stretching. More precisely, the proof of \cite[Proposition 2.7]{zbMATH07673358} shows that the positive symplectic cohomology of $\partial(V\times \D)$ of period at most $2$ counts curves contained outside of the filling by neck-stretching. In particular, it can be identified with curves in $\widehat{Y}$. Similarly, the proof of \cite[Proposition 3.2]{zbMATH07673358} shows that $\la\delta_{\partial}(\cdot),[\Lambda]\ra:SH^{*}_{+,<2}(V\times \D)\to \Q$ counts curves contained outside of the filling by neck-stretching, hence \eqref{m3} holds. \eqref{m2} follows from simple action considerations, see e.g.\ \cite[(3) of \S 4.3]{bowden2022making}.

For $Y=\partial W$ for flexible $W$ with $c_1(W)$ torsion, \eqref{m2} and \eqref{m3} follow from the proof of \cite[Theorem A]{zbMATH07367119}. Namely, the analogous curves contributing to the positive symplectic cohomology and $\delta_{\partial}$ are contained outside of the filling by neck-stretching and index constraint, e.g.\ \cite[(4) of \S 4.3]{bowden2022making}. For \eqref{m1}, since we can assume all Hamiltonian orbits have Conley-Zehnder indices at least $1$ and SFT degrees of all Reeb orbits of period smaller than $2$ are positive by \cite{zbMATH07184228}, the claim follows from the fact that the expected dimension 
$$\mu_{CZ}(\overline{\alpha})-\mu_{CZ}(\overline{\beta})-1-\sum_{\gamma\in \Gamma}(\mu_{CZ}(\gamma)+n-3)$$
is negative, since $\mu_{CZ}(\overline{\alpha})=2$.
\end{proof}

We consider $\overline{\cM}_{H,W_{\Lambda}}(\overline{\alpha},L,\Gamma)$, the compactified moduli space 
\begin{equation}\label{eqn:WH}
    \overline{\left\{u:\C \backslash \{p_1,\ldots,p_k\} \to \widehat{W_{\Lambda}}\left|
    \begin{array}{c}
    \partial_s u+J(\partial_t u-X_{H})=0, p_i\ne 0,\\
     \displaystyle \lim_{s\to \infty} u(s,\cdot)=\overline{\alpha}, u(0)\in L_{1-\eta}, \lim_{p_i} u= \gamma_i \end{array}\right.\right\}/\R},
\end{equation}
where $\Gamma=\{\gamma_i\}$ is a multiset of good Reeb orbits of $Y_{\Lambda}$, $p_i$ are negative punctures, and $L_{1-\eta}$ is the intersection of the Lagrangian disk $L$ (the co-core of the Weinstein handle in the surgery cobordism) with the complement of $(1-\eta,1)_r\times Y$. $H$ is the Hamiltonian above but viewed as defined on $\widehat{W_{\Lambda}}$.

By stretching the convex boundary of $W_{\Lambda}$, \Cref{prop:corb} implies that $\overline{\cM}_{H,W_{\Lambda}}(\overline{\alpha},\Lambda)$ is not empty with a non-trivial count for a sufficiently stretched almost complex structure. Then the idea to prove \Cref{thm:main} is deforming curves in $\overline{\cM}_{H,W_{\Lambda}}(\overline{\alpha},\Lambda)$ by allowing point to move on the Lagrangian filling $L$ of $\Lambda$. Then the other boundary component shall correspond to the elimination of unit in the contact homology of $Y_{\Lambda}$. More precisely, we have the following. 

\begin{proposition}\label{prop:corbordism}
    We have
    $$\partial_{\CH}\left(\sum\frac{1}{\mu_{\Gamma}\kappa_{\Gamma}}\#\overline{\cM}_{H,W_{\Lambda}}(\overline{\alpha},L,\Gamma)q^{\Gamma}\right)\ne 0 \in \Q,$$hence $\CHA_*(Y_{\Lambda})=0$ and \Cref{thm:main} follows.
\end{proposition}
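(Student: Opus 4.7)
Write $\xi \in \CC(Y_\Lambda)$ for the chain in the statement. The plan is to prove the identity
$$\partial_{\CH}(\xi) \;=\; \pm\, \#\overline{\cM}_{Y,H}(\overline{\alpha},\Lambda)\cdot 1 \;\in \;\CC(Y_\Lambda),$$
whose right-hand side is a nonzero scalar by \Cref{prop:curve}; this forces $[1]=0$ in $\CHA_*(Y_\Lambda)$ and hence the vanishing of the contact homology. The argument is a standard boundary analysis of the compactified one-dimensional moduli spaces $\overline{\cM}^{(1)}_{H,W_\Lambda}(\overline{\alpha},L,\Gamma')$, indexed by multisets $\Gamma'$ of good Reeb orbits of $Y_\Lambda$.

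First, I would enumerate the codimension-one strata of the SFT--Floer compactification. With $H$ linear only on the $Y$-cylindrical end of $\widehat{W_\Lambda}$ and vanishing elsewhere, these strata split into three types: (a) SFT breaking at the concave boundary $Y_\Lambda$, whose contributions (summed over $\Gamma'$ and weighted by $1/(\mu_{\Gamma'}\kappa_{\Gamma'})$) reassemble into $\partial_{\CH}(\xi)$ by the Leibniz rule; (b) Floer breaking at the positive Hamiltonian end, of the form $\overline{\cM}_{Y,H}(\overline{\alpha},\overline{\beta})\times\overline{\cM}_{H,W_\Lambda}(\overline{\beta},L,\Gamma')$; and (c) neck-type degeneration along $Y\subset \widehat{W_\Lambda}$, realised by a neck-stretching family of almost complex structures, of the form $\overline{\cM}_{Y,H}(\overline{\alpha},\Gamma'')\times\overline{\cM}^{\mathrm{low}}(\Gamma'',L,\Gamma')$. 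Summing the codim-one strata to zero yields $\partial_{\CH}(\xi) + (\mathrm{b}) + (\mathrm{c}) = 0$. By \Cref{prop:curve}, the cylinders in (b) are counted as zero (the cocycle property of $\overline{\alpha}$ stated in the proposition), and the same proposition also rules out mixed Floer--SFT punctures on the cylinder. Hence $(\mathrm{b}) = 0$.

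The crux is to identify $(\mathrm{c}) = \mp\#\overline{\cM}_{Y,H}(\overline{\alpha},\Lambda)\cdot 1$. The geometric idea is a one-parameter deformation of the marked-point constraint. Since the cocore $L$ and the core $\widetilde{L}$ of the surgery handle meet transversely in one point and $\partial\widetilde{L}=\Lambda \subset Y$, the relative class $[L,\partial L]\in H_n(W_\Lambda,Y_\Lambda;\Q)$ is Poincar\'e--Lefschetz dual to a cycle on $Y$ representing $[\Lambda]$. I would construct a smooth family of submanifold constraints $C_t \subset \widehat{W_\Lambda}$ interpolating between $C_0 = L_{1-\eta}$ and $C_1 = \{1-\eta\}_r \times \Lambda \subset \widehat{Y}$, and consider the parametric moduli spaces $\overline{\cM}_{H,W_\Lambda}(\overline{\alpha},C_t,\Gamma')$. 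Their total boundary splits into: the $t = 0$ count (the chain $\xi$); the $t = 1$ count, which by \Cref{prop:curve} reduces to $\#\overline{\cM}_{Y,H}(\overline{\alpha},\Lambda)\cdot 1$ in the $\Gamma' = \emptyset$ sector (and is empty for $\Gamma'\ne\emptyset$); and interior degenerations that coincide exactly with the neck-sum (c). Equating these contributions yields the claimed identification of (c), and substituting back gives the displayed identity for $\partial_{\CH}(\xi)$.

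The hard part will be the rigorous execution of this last step: setting up parametric transversality for the family $C_t$ within Pardon's VFC framework, tracking the Reeb-orbit cascade corrections that arise along the homotopy and verifying they cancel all contributions with $\Gamma' \neq \emptyset$, and accounting carefully for the combinatorial factors $1/(\mu_\Gamma\kappa_\Gamma)$ and orientation signs. In spirit this parallels the cascade-type arguments underlying \Cref{prop:curve} as carried out in \cite{zbMATH07673358}, and I would expect the same technical apparatus (filtration by action, integrated maximum principle, and neck-stretching-compatible virtual perturbations) to suffice.
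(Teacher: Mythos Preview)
Your boundary enumeration contains a genuine error: for a \emph{fixed} almost complex structure on $\widehat{W_\Lambda}$, there is no ``neck-type degeneration along $Y$'' among the codimension-one strata of the SFT/Floer compactification. Neck-stretching is a deformation of $J$, not a boundary phenomenon at fixed $J$, so your type (c) does not exist as stated and the relation $\partial_{\CH}(\xi)+(\mathrm b)+(\mathrm c)=0$ is not what the one-dimensional moduli spaces actually produce.

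The stratum you are missing is far more elementary. The Lagrangian cocore $L$ is a disk with $\partial L=\Lambda\subset Y$, so $L_{1-\eta}$ is a disk whose boundary $\partial L_{1-\eta}$ is a parallel copy of $\Lambda$ on the slice $\{1-\eta\}\times Y$. The marked-point condition $u(0)\in L_{1-\eta}$ therefore has a genuine codimension-one boundary stratum where $u(0)\in\partial L_{1-\eta}$, and it is this stratum that supplies the nonzero scalar. Neck-stretching along $\{1-\eta\}\times Y$ then enters only as a \emph{tool} to identify this stratum with $\overline{\cM}_{Y,H}(\overline{\alpha},\Lambda)$ (extra negative punctures asymptotic to Reeb orbits of $Y$ being excluded by \Cref{prop:curve}\eqref{m3}), not as a source of boundary components. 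With this correction the argument is immediate and your parametric family $C_t$ becomes unnecessary: rather than interpolating from $L_{1-\eta}$ to $\Lambda$ and analysing a separate chain-homotopy relation, one simply observes that $L_{1-\eta}$ already has $\Lambda$ as its boundary. Your remark that $[L]$ and $[\Lambda]$ are Poincar\'e--Lefschetz dual is the homological shadow of this, but the mechanism in the proof is the direct geometric fact $\partial L_{1-\eta}\cong\Lambda$.
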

\begin{proof}
    We consider the boundary configuration of $\overline{\cM}_{H,W_{\Lambda}}(\overline{\alpha},L,\Gamma)$ with virtual dimension $1$. The boundary contains the following cases:
    \begin{enumerate}
        \item\label{t1} Floer breaking. We argue that curves with Floer breaking do not contribute to the boundary. First of all, \eqref{m1} of \Cref{prop:curve} implies that the top-level curve from Floer breaking has no negative punctures asymptotic to Reeb orbits by neck-stretching along $\{1-\eta\}\times Y \subset W_{\Lambda}$. When $Y=\partial W$ for a flexible $W$ with $c_1(W)$ torsion, we have that $c_1(W_{\Lambda})$ is torsion. If a Floer cylinder without negative punctures in $\widehat{W}_{\Lambda}$ has virtual dimension $0$, the Floer cylinder in $\widehat{Y}$ with the same asymptotic orbits also has virtual dimension $0$ as the virtual dimension only depends on $\overline{\beta}$ in this case. Then by neck-stretching along $\{1-\eta\}\times Y \subset W_{\Lambda}$, we see that Floer cylinders with virtual dimension $0$ coincide with \eqref{m2} of \Cref{prop:curve}, as the SFT degrees of orbits are positive. When $Y=\partial(V\times \D)$, we can apply neck-stretching along $\{1-\eta\}\times Y \subset W_{\Lambda}$; then by action reasons, the Floer cylinders in $\widehat{W}_{\Lambda}$ coincide with Floer cylinders in $\widehat{Y}$, which have no negative punctures and the rigid ones are counted as $0$ in \eqref{m2} of \Cref{prop:curve}.
            \begin{figure}[H]
    \centering
    \begin{tikzpicture}
    \draw (0,0) [out=90,in=180] to (0.5,0.25) [out=0,in=90] to (1,0) [out=270,in=0] to (0.5,-0.25) [out=180,in=270] to (0,0);
    \draw node at (0.5,0) {$\overline{\alpha}$};
    \draw node at (0.5,-2) {$\overline{\beta}$};
    \draw node at (0.5,-2.8) {$\overline{\beta}$};
    \draw node at (2.5,-4) {$\gamma$};
    \node at (0.5,-3.5) [circle,fill,inner sep=1.5pt] {};
    \node at (0.5,-3.8) {$\qquad \in L_{1-\eta}$};
    \draw (0,0) to (0,-2) [out=270,in=180] to (0.5,-2.25) [out=0,in=270] to (1,-2) [out=90,in=270]to (1,-1.5) [out=90,in=180] to (1.5,-1.25) [out=0,in=90] to (2,-1.5) [out=270,in=90]to (2,-4) [out=270,in=180] to (2.5,-4.25) [out=0,in=270] to (3,-4) [out=90,in=270] to (3,-1) [out=90,in=0] to (2,-0.5) [out=180,in=270] to (1,0);
    \draw[dashed] (0,-2) [out=90,in=180] to (0.5,-1.75) [out=0,in=90] to (1,-2);
    \draw[dashed] (2,-4) [out=90,in=180] to (2.5,-3.75) [out=0,in=90] to (3,-4);
    \draw (0,-2.8) [out=90,in=180] to (0.5,-2.55) [out=0,in=90] to (1,-2.8) [out=270,in=0] to (0.5,-3.05) [out=180,in=270] to (0,-2.8) [out=270,in=180] to (0.5,-3.5) [out=0,in=270]  to (1,-2.8);
    \end{tikzpicture}
    \qquad
    \begin{tikzpicture}
    \draw node at (0.5,0) {$\overline{\alpha}$};
    \draw node at (2.5,-4) {$\gamma$};
    \node at (0.5,-2.5) [circle,fill,inner sep=1.5pt] {};
    \node at (0.5,-2.8) {$\qquad \in \partial L_{1-\eta}$};
    \draw (0,0) [out=90,in=180] to (0.5,0.25) [out=0,in=90] to (1,0) [out=270,in=0] to (0.5,-0.25) [out=180,in=270] to (0,0);
    \draw  (1,-2) [out=90,in=270]to (1,-1.5) [out=90,in=180] to (1.5,-1.25) [out=0,in=90] to (2,-1.5) [out=270,in=90]to (2,-4) [out=270,in=180] to (2.5,-4.25) [out=0,in=270] to (3,-4) [out=90,in=270] to (3,-1) [out=90,in=0] to (2,-0.5) [out=180,in=270] to (1,0);
    \draw[dashed] (2,-4) [out=90,in=180] to (2.5,-3.75) [out=0,in=90] to (3,-4);
    \draw (0,0) to (0,-2) [out=270,in=180] to (0.5,-2.5) [out=0,in=270] to (1,-2);
    \end{tikzpicture}
    \qquad
    \begin{tikzpicture}
    \draw node at (0.5,0) {$\overline{\alpha}$};
    \draw node at (2.5,-2) {$\gamma$};
    \draw node at (2.5,-2.8) {$\gamma$};
    \draw node at (1.5,-4) {$\gamma_1$};
    \draw node at (3.5,-4) {$\gamma_2$};
    \node at (0.5,-1.75) [circle,fill,inner sep=1.5pt] {};
    \node at (0.5,-2) {$\qquad \in L_{1-\eta}$};
    \draw (0,0) [out=90,in=180] to (0.5,0.25) [out=0,in=90] to (1,0) [out=270,in=0] to (0.5,-0.25) [out=180,in=270] to (0,0);
    \draw  (1,-1.25) [out=90,in=270] to (1,-1) [out=90,in=180] to (1.5,-0.75) [out=0,in=90] to (2,-1) [out=270,in=90]to (2,-2) [out=270,in=180] to (2.5,-2.25) [out=0,in=270] to (3,-2) [out=90,in=270] to (3,-1) [out=90,in=0] to (2,-0.5) [out=180,in=270] to (1,0);
    \draw[dashed] (2,-2) [out=90,in=180] to (2.5,-1.75) [out=0,in=90] to (3,-2);
    \draw (0,0) to (0,-1.25) [out=270,in=180] to (0.5,-1.75) [out=0,in=270] to (1,-1.25);
    \draw (2,-2.8) [out=90,in=180] to (2.5,-2.55) [out=0,in=90] to (3,-2.8) [out=270,in=0] to (2.5,-3.05) [out=180,in=270] to (2,-2.8) [out=270,in=90] to (1,-4) [out=270,in=180] to (1.5,-4.25) [out=0,in=270] to (2,-4) [out=90,in=180] to (2.5,-3.5) [out=0,in=90] to (3,-4) [out=270,in=180] to (3.5,-4.25) [out=0,in=270] to (4,-4) [out=90,in=270] to (3,-2.8);
    \draw[dashed] (1,-4) [out=90,in=180] to (1.5,-3.75) [out=0,in=90] to (2,-4);
    \draw[dashed] (3,-4) [out=90,in=180] to (3.5,-3.75) [out=0,in=90] to (4,-4);
    \end{tikzpicture}
    \caption{The Floer breaking, boundary constraint, and SFT breaking respectively. The first and second curves (with the marked asymptotics) do not appear by \Cref{prop:curve} and neck-stretching}
    \end{figure}
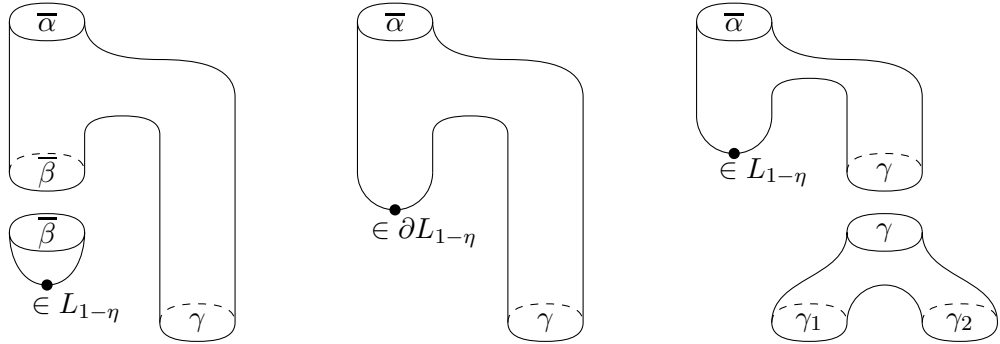
        \item\label{t2} The point constraint goes to the boundary of $L_{1-\eta}$. This can be identified with $\overline{\cM}_{Y,H}(\overline{\alpha},\Lambda)$ by \eqref{m3} of \Cref{prop:curve} using neck-stretching along $\{1-\eta\}\times Y \subset W_{\Lambda}$.
        \item\label{t3} SFT breaking at negative punctures, which corresponds to applying the contact homology differential. 
    \end{enumerate}
    In summary, the fact that 
    $$0=\#\frac{1}{\mu_{\Gamma}\kappa_{\Gamma}}\partial \overline{\cM}_{H,W_{\Lambda}}(\overline{\alpha},L,\Gamma), \text{ where }\virdim \overline{\cM}_{H,W_{\Lambda}}(\overline{\alpha},L,\Gamma)=1$$
    implies that
    $$\partial_{\CH}(\sum_{\virdim = 0}\frac{1}{\mu_{\Gamma}\kappa_{\gamma}}\#\cM_{H,W_{\Lambda}}(\overline{\alpha},L,\Gamma)q^{\Gamma})=\#\overline{\cM}_Y(\overline{\alpha},\Lambda) \ne 0.$$
    To make sense of the above computation, we need to apply Pardon's VFC. Types \eqref{t1} and \eqref{t2} of the boundary, as discussed above, are supplied by curves in \Cref{prop:curve}, which are cut out transversely for a generic choice of almost complex structures that is sufficiently stretched along $\{1-\eta\}\times Y$. Type \eqref{t3} of boundary comes from the SFT breaking at the negative punctures, which is taken care of by the VFC without modifying the count of the curves that are transversely cut out, by \cite[Proposition 4.33]{zbMATH07085531}.
\end{proof}

\subsection{Contact connected sum}\label{ss:42}
In the contact connected sum case, we need to establish the analog of \Cref{prop:curve}. We will use the Viterbo transfer map with respect to augmentations to transfer the curve in $Y_1$ to $Y_1\#Y_2$. 
\begin{figure}[H]
    \centering
    \begin{tikzpicture}
    \draw (-4,2) [out=-40,in=90] to (-3,0) [out=-90,in=40] to (-4,-2);
    \draw (4,2) [out=220,in=90] to (3,0) [out=-90,in=140] to (4,-2);
    \draw (-3.2,1) [out=-50,in=180] to (0,0.2) [out=0,in=230] to  (3.2,1);
    \draw (-3.2,-1) [out=50,in=180] to (0,-0.2) [out=0,in=-230] to  (3.2,-1);
    \draw node at (-4,1.5) {$Y_1$};
    \draw node at (4,1.5) {$Y_2$};
    \draw node at (0,-0.5) {$Y_1\# Y_2$};
    \end{tikzpicture}
\end{figure}
We view the above $1$-handle attachment as a degenerate exact cobordism $(W,\lambda)$ from $Y_1\sqcup Y_2$ to $Y_1\# Y_2$, i.e.\ the cobordism has width $0$ on $Y_1\sqcup Y_2$ away from the surgery region. We call this part the thin part of the cobordism. Let $\widehat{W}$ be the completion of $W$ with respect to the Liouville vector field. We use $\overline{\lambda}$ to denote a $1$-form that is the Liouville form $\lambda$ on $W$, and equals $\lambda|_{\partial_+W}$ on $\partial_+ W\times \R_+$ and $\lambda|_{\partial_-W}$ on $\partial_-W\times \R_-$. This form is not smooth along the boundary of the handle, but is smooth everywhere else. We will be using almost complex structures $J$ (possibly depending on other parameters, like the $S^1$-coordinate of the cylinder), such that $\rd \overline{\lambda} (\cdot,J\cdot)\ge 0$ wherever $\rd \overline{\lambda}$ is defined.  A neighborhood of $W$ in $\widehat{W}$ can be colored as follows:
\begin{figure}[H]
    \centering
    \begin{tikzpicture}
    \path [fill=blue!15] (-5,2) to [out=-40, in=90]  (-4,0) to [out=-90,in=40] (-5,-2) to (-4,-2) to [out=40, in=-90] (-3,0) to [out=90,in=-40] (-4,2);
    \path [fill=blue!15] (5,2) to [out=220, in=90]  (4,0) to [out=-90,in=140] (5,-2) to (4,-2) to [out=140, in=-90] (3,0) to [out=90,in=220] (4,2);
    \path [fill=red!15] (-4,2) [out=-40,in=90] to (-3,0) [out=-90,in=40] to (-4,-2) to [out=0,in=180] (-3,-2) to [out=40,in=180] (0,-1) to [out=0,in=140] (3,-2) to [out=0,in=180] (4,-2) to [out=140,in=-90] (3,0) to [out=90,in=220] (4,2) to [out=180,in=0] (3,2) to [out=220,in=0] (0,1) to [out=180,in=-40] (-3,2) to [out=180,in=0] (-4,2);
    \path [fill=green!15]  (-3,-2) to [out=40,in=180] (0,-1) to [out=0,in=140] (3,-2) to [out=180,in=0] (2,-2) to [out=140,in=0] (0,-1.5) to [out=180,in=40] (-2,-2) to [out=180,in=0] (-3,-2);
    \path [fill=green!15]  (-3,2) to [out=-40,in=180] (0,1) to [out=0,in=-140] (3,2) to [out=180,in=0] (2,2) to [out=-140,in=0] (0,1.5) to [out=180,in=-40] (-2,2) to [out=180,in=0] (-3,2);
    \draw (-5,2) [out=-40,in=90] to (-4,0) [out=-90,in=40] to (-5,-2);
    \draw (-4,2) [out=-40,in=90] to (-3,0) [out=-90,in=40] to (-4,-2);
    \draw (4,2) [out=220,in=90] to (3,0) [out=-90,in=140] to (4,-2);
    \draw (5,2) [out=220,in=90] to (4,0) [out=-90,in=140] to (5,-2);
    \draw (-3.2,1) [out=-50,in=180] to (0,0.2) [out=0,in=230] to  (3.2,1);
    \draw (-3.2,-1) [out=50,in=180] to (0,-0.2) [out=0,in=-230] to  (3.2,-1);
    \draw node at (-4,1.5) {$Y_1$};
    \draw node at (4,1.5) {$Y_2$};
    \draw node at (0,-0.5) {$Y_1\# Y_2$};
    \end{tikzpicture}
\end{figure}
Here the concave boundary of the blue region and the convex boundary of the red region are slices of the symplectization of $Y_1\cup Y_2$ and $Y_1\#Y_2$ respectively. The blue region is from flowing $\partial_-W$ along the negative Liouville vector field by time $\ln 2$, namely $([1/2,1]_r\times \partial_-W, r\lambda|_{\partial_-W})$. The red region outside of the handle is from flowing $\partial_+W$ along the Liouville vector field by time $\ln 2$. Hence the convex boundary of the red region is $(\partial_+W, 2\lambda|_{\partial_+W})$. The green region is from flowing along the Liouville vector field by time $\ln 2$, hence is $([2,4]_r\times \partial_+W, r\lambda|_{\partial_+W})$.
We consider an \emph{autonomous} Hamiltonian $H_{VT}$ in the Viterbo transfer for $\widehat{W}$ that is a perturbation to the following
\begin{enumerate}
    \item $H_{VT}$ is zero below the blue region.
    \item $H_{VT}$ is linear on the blue region with slope $4$. Here the slope is defined with respect to the concave boundary of the blue region. In particular, it is of slope $8$ with respect to the $r$-coordinate used in the identification above.
    \item $H_{VT}$ is constant on the red region.
    \item $H_{VT}$ is linear on the green region with slope $4$ (slope $2$ with respect to the $r$-coordinate used in the identification above).
\end{enumerate}
As a consequence, $X_{H_{VT}}$ is zero in the red region and is parallel to the Reeb vector field on $Y_1\sqcup Y_2$ and $Y_1\#Y_2$ in the blue and green regions respectively. As a consequence, we have $\int u^*\rd \overline{\lambda} \ge 0$, whenever $u$ solves the Floer equation for $H_{VT}$ on $\widehat{W}$ possibly with extra negative punctures. Under the conditions of \Cref{thm:main'}, we can assume the contact form on $Y_1$ satisfies the properties in \S \ref{ss:41} and $Y_2$ has a very large contact form such that all Reeb orbits have period $\gg 8$. After applying the thin $1$-handle to $Y_1\sqcup Y_2$, by \cite[Lemma 4.3]{bowden2022making}, Reeb orbits on $Y_1\#Y_2$ of period smaller than $2$ consist of
\begin{enumerate}
    \item Reeb orbits with period smaller than $2$, i.e.\ those considered in  \S \ref{ss:41}. We use $\gamma_{\#}$ to stand for the orbit on $Y_1\#Y_2$ corresponding to $\gamma$ on $Y_1$. 
    \item Multiple covers $\gamma_{h,i}^k$ of the Reeb orbit $\gamma_{h,i}$ for $1\le i \le (\dim Y-1)/2$ contained in a standard contact sphere of dimension $\dim Y-2$ in the belt of the $1$-handle.
    The Conley-Zehnder index of $\gamma_{h,i}^k$ is $(\dim Y-1)/2-1+(k-1)(\dim Y-1)+2i \ge (\dim Y+1)/2$, where the Conley-Zehnder index is computed from bounding discs contained in the handle.
\end{enumerate}
Therefore, $H_{VT}$ only sees those orbits in the green region due to the rescaling. In the blue region, $H_{VT}$ sees Reeb orbits on $(Y_1\cup Y_2,\lambda|_{\partial_-W})$ with period at most $8$, which are always on $Y_1$ by our assumption. 

\subsubsection{The first case: $c_1(Y_1),c_1(Y_2)$ are torsion}\label{sss:421} We can assume that the SFT degrees of orbits with periods up to $8$ are strictly positive; these degrees are globally defined. So even though $\CC(Y_1\cup Y_2,\lambda|_{\partial_-W})$ may not have an augmentation, the part with action at most $8$ does. Specifically, we have the trivial map $\epsilon_0$ sending every generator to zero (by our degree assumptions), or equivalently the augmentation induced from the natural filling of $Y_1$ that is trivial on orbits of $Y_2$ (which are invisible with the action threshold above). We consider a Hamiltonian $H$ on the completion of $(Y_1\#Y_2, 2\lambda|_{\partial_+W})$ with slope $4$ and a Hamiltonian $G$ on the completion of $(Y_1\cup Y_2,\frac{1}{2}\lambda|_{\partial_-W})$ with slope $4$. Using $H_{VT}$ (and the cascades model to deal with the $S^1$ family of Hamiltonian orbits) we have the following Viterbo transfer map
\begin{equation}\label{eqn:diagram}
\xymatrix{
SH_{+,<4}^*(Y_1\#Y_2, 2\lambda|_{\partial_+W},\epsilon_0\circ \phi_{\overline{W}})\ar[d]^{\simeq}  & SH^*_{+,\le 4}(Y_1,\frac{1}{2}\lambda|_{\partial_-W},\epsilon_0)\\
SH_{+,<4}^*(\overline{W},\widehat{\lambda},\epsilon_0) \ar[r]^{\phi_{Viterbo}\qquad }\ar[d] & SH_{+,<4}^*(Y_1\sqcup Y_2, \frac{1}{2} \lambda|_{\partial_-W},  \epsilon_0)\ar[u]^{\simeq}\ar[d]^{\delta_{\partial}}\\
H^{*+1}(W;\Q) \ar[r] & H^{*+1}(Y_1\sqcup Y_2;\Q)
}
\end{equation}
where $\overline{W}$ is the cobordism composed of the blue and red regions. In the Viterbo transfer map, which is a continuation map from $H$ to $H_{VT}$ then to the quotient complex determined by $G$, since $\int u^*\rd \overline{\lambda} \ge 0$ for all relevant Floer cylinders, we know that 
\begin{equation}\label{eqn:vit}
    \phi_{Viterbo}(\overline{\gamma}_{\#})=\overline{\gamma}+\sum c_i\overline{\beta}_i
\end{equation}
where $\overline{\gamma}$ is not from the $1$-handle and the $\beta_i$ have smaller period than $\gamma$ on $Y_1$. This follows from the fact that $\int u^*\rd \overline{\lambda} =0$ implies that $u$ is contained in $\R\times \Ima \gamma$. The leading coefficient comes from the transversely cut out trivial cylinder by our choice of Hamiltonian, i.e.\ it only depends on the coordinate from the Liouville vector field in the thin region where $\gamma$ lies.

\begin{proposition}\label{prop:corb}
 In the case \eqref{thm1} of  \Cref{thm:main'}, we consider a Hamiltonian $H$ on the completion of $(Y_1\#Y_2,\lambda|_{\partial_+W})$ with slope $2$. Using the setup in the discussion above for $Y=Y_1\#Y_2$, there exists $x=\sum a_i\overline{\alpha_i}_\#$, such that 
\begin{enumerate}
    \item\label{c1} $\sum a_i\#\overline{\cM}_{Y,H}(\overline{\alpha_i}_{\#},\overline{\beta})=0$, where $\overline{\beta}$ is a non-constant Hamiltonian orbit; 
    \item\label{c2} $\overline{\cM}_{Y,H}(\overline{\alpha_i}_{\#},\overline{\beta},\Gamma)=\emptyset$ for all $i$, $\overline{\beta}$, and non-empty $\Gamma$;
    \item\label{c3} $\overline{\cM}_{Y,H}(\overline{\alpha_i}_{\#},\Lambda,\Gamma)=\emptyset$ for non-empty $\Gamma$ and $\sum a_i\#\overline{\cM}_{Y,H}(\overline{\alpha_i}_{\#},\Lambda) \ne 0$.
\end{enumerate}
\end{proposition}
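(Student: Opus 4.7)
The plan is to mimic the strategy of \S\ref{ss:41} on $Y_1$ and transport the resulting closed cochain to $Y=Y_1\#Y_2$ via the Viterbo transfer map of \eqref{eqn:diagram}. First, I would apply \Cref{prop:curve} to $Y_1$ equipped with its natural Liouville filling and with the projected class $[\Lambda]_1\in H_{n-1}(Y_1;\Q)$, defined as the image of $[\Lambda]$ under the splitting $H_*(Y;\Q)\simeq H_*(Y_1;\Q)\oplus H_*(Y_2;\Q)$. This yields a Hamiltonian orbit (or finite combination) $\overline{\alpha}$ on $\widehat{Y}_1$ with $\mu_{CZ}(\overline{\alpha})=2$ satisfying $\delta(\overline{\alpha})=0$, $\overline{\cM}_{Y_1,G}(\overline{\alpha},\cdot,\Gamma)=\emptyset$ for $\Gamma\ne\emptyset$, and $\la\delta_\partial(\overline{\alpha}),[\Lambda]_1\ra\ne 0$; the hypothesis that $[\Lambda]_1$ is nontrivial in the natural filling of $Y_1$ is exactly what \Cref{prop:curve} requires.

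Second, I would lift $\overline{\alpha}$ to a cochain $x=\sum a_i\overline{\alpha_i}_\#$ on $Y$ by inverting $\phi_{Viterbo}$. Formula \eqref{eqn:vit} exhibits $\phi_{Viterbo}$ as a triangular map in the period ordering, with identity leading term on the finite-dimensional span of $\mu_{CZ}=2$ orbits not issued from the surgery $1$-handle. Since no $Y_2$-orbit contributes at our action level by the period hypothesis on $Y_2$, this matrix is invertible over $\Q$ and $\phi_{Viterbo}(x)=\overline{\alpha}$ can be solved iteratively. A key observation is that the augmentation $\epsilon_0\circ\phi_{\overline{W}}$ acts trivially on every Reeb orbit of period $<2$ on $Y$: $c_1(Y)$ being torsion together with Lazarev's index estimate and the explicit formula $\mu_{CZ}(\gamma_h^k)=2k+(\dim Y_1+1)/2-2$ force strictly positive SFT degrees, so the augmentation-deformed differentials reduce to the geometric ones.

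Properties \eqref{c1} and \eqref{c2} now follow formally. Property \eqref{c1} is the chain-map identity $\phi_{Viterbo}\circ\delta=\delta\circ\phi_{Viterbo}$ combined with injectivity of $\phi_{Viterbo}$ from triangularity: $\phi_{Viterbo}(\delta(x))=\delta(\overline{\alpha})=0$ forces $\delta(x)=0$, which unpacks to $\sum a_i\#\overline{\cM}_{Y,H}(\overline{\alpha_i}_\#,\overline{\beta})=0$. Property \eqref{c2} is the identical dimension count as in \Cref{prop:curve}\eqref{m1}: the expected dimension
$$\mu_{CZ}(\overline{\alpha_i}_\#)-\mu_{CZ}(\overline{\beta})-1-\sum_{\gamma\in\Gamma}(\mu_{CZ}(\gamma)+n-3)$$
is negative whenever $\Gamma\ne\emptyset$, since $\mu_{CZ}(\overline{\alpha_i}_\#)=2$ and every orbit of period $<2$ on $Y$ has strictly positive SFT degree.

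For property \eqref{c3}, the emptiness of $\overline{\cM}_{Y,H}(\overline{\alpha_i}_\#,\Lambda,\Gamma)$ for $\Gamma\ne\emptyset$ is the same dimension count. For the non-vanishing, I would use the commutative square
\begin{equation*}
\xymatrix{
SH^*_{+,<4}(Y,2\lambda|_{\partial_+W},\epsilon_0\circ\phi_{\overline{W}})\ar[r]^{\quad\delta_\partial}\ar[d]^{\phi_{Viterbo}} & H^{*+1}(Y;\Q)\ar[d]\\
SH^*_{+,<4}(Y_1\cup Y_2,\tfrac{1}{2}\lambda|_{\partial_-W},\epsilon_0)\ar[r]^{\quad\delta_\partial} & H^{*+1}(Y_1\cup Y_2;\Q)
}
\end{equation*}
whose right vertical map is induced by the cobordism $\overline{W}$ and is an isomorphism in middle degrees, together with the fact that $\delta_\partial(\overline{\alpha})$ is supported on the $Y_1$-component of $H^{*+1}(Y_1\cup Y_2;\Q)$. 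This would yield
$$\sum a_i\#\overline{\cM}_{Y,H}(\overline{\alpha_i}_\#,\Lambda)=\la\delta_\partial(x),[\Lambda]\ra=\la\delta_\partial(\overline{\alpha}),[\Lambda]_1\ra\ne 0.$$
I expect the main obstacle to lie in the chain-level verification of this commutative square, which requires a careful combination of neck-stretching along the thin $1$-handle region and the integrated maximum principle (analogous to the argument leading to \eqref{eqn:vit} and to \cite[Lemma 2.2]{cieliebak2018symplectic}) to control curves that simultaneously carry a negative puncture and hit a representative of $[\Lambda]$.
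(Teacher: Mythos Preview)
Your overall strategy---pull back $\overline{\alpha}$ through $\phi_{Viterbo}$, use positivity of SFT degrees to kill punctured moduli spaces, and read off the pairing with $[\Lambda]$ from the commutative square---is exactly the paper's argument. Two points need tightening.

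First, your injectivity claim for $\phi_{Viterbo}$ in step \eqref{c1} is not quite what \eqref{eqn:vit} gives you: the triangular form is only established on the span of the non-handle generators $\overline{\gamma}_\#$, and a priori $\delta(x)$ could have components along $\overline{\gamma}_h^k$. The paper closes this by observing that $\mu_{CZ}(\check{\gamma}_h^k),\mu_{CZ}(\hat{\gamma}_h^k)\ge (\dim Y+1)/2\ge 3$ once $\dim Y\ge 5$, so the $\mu_{CZ}=1$ and $\mu_{CZ}=2$ graded pieces contain no handle orbits and $\phi_{Viterbo}$ is genuinely an isomorphism there. You quote the index formula for $\gamma_h^k$ but never apply it at this step; you also need to say why $\dim Y=3$ is excluded (the paper defers that case to part \eqref{thm2}).

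Second, and more seriously, your dimension count for \eqref{c2} is incomplete in the sub-case $Y_1=\partial(V\times\D)$. Negativity of
\[
2-\mu_{CZ}(\overline{\beta})-1-\sum_{\gamma\in\Gamma}\bigl(\mu_{CZ}(\gamma)+n-3\bigr)
\]
requires not only positive SFT degrees in $\Gamma$ but also $\mu_{CZ}(\overline{\beta})\ge 1$. For flexible $Y_1$ this is Lazarev's input, but for $Y_1=\partial(V\times\D)$ one has $\mu_{CZ}(\check{\gamma}_p)=n+1-\ind(p)$, which is $\le 0$ as soon as $\ind(p)\ge n+1$ (possible when $\dim V\ge 4$). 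The paper does not use a dimension count here; it invokes \emph{action reasons}: all orbits $\gamma_p$ have period close to $1$, so a Floer cylinder from $\overline{\alpha_i}_\#$ to $\overline{\beta}$ carrying an extra negative puncture of period close to $1$ would have negative $\rd\lambda$-energy. Your proof of \eqref{c2} should branch on the two sub-cases as the paper does.
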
	
\begin{proof} 
    Let $\overline{\alpha}$ be the orbit in \Cref{prop:curve}; then $\mu_{CZ}(\overline{\alpha})=2$.  We can assume $\dim Y\ge 5$; otherwise $Y=\partial(V\times \D)$ with a surface $V$ with boundary, which will be dealt with in \eqref{thm2} of \Cref{thm:main'}. Since $\mu_{CZ}(\check{\gamma}_{h,i}^k),\mu_{CZ}(\hat{\gamma}_{h,i}^k)\ge \frac{\dim Y+1}{2}\ge 3$, $\phi_{Viterbo}$ is an isomorphism in the $\mu_{CZ}=2$ piece by \eqref{eqn:vit}.  We can take $x$ to be $\phi^{-1}_{Viterbo}(\overline{\alpha})$. As the SFT degrees of orbits (with period up to $2$) on $Y$ are positive, the claims for the moduli spaces $\overline{\cM}_{Y,H}$ are equivalent to the same claims for moduli spaces $\overline{\cM}_{W,H}$ on the cobordism $W$ by neck-stretching; the latter appear in the Viterbo transfer map. That $\overline{\cM}_{W,H}(\overline{\alpha_i}_{\#},\Lambda,\Gamma)=\emptyset$ for $\Gamma \ne \emptyset$ follows from dimension counting as $\mu_{CZ}(\overline{\alpha_i}_\#)=2$ and SFT degrees of orbits in $\Gamma$ are positive. That $\overline{\cM}_{W,H}(\overline{\alpha_i}_{\#},\overline{\beta},\Gamma)=\emptyset$ follows from degree reasons when $Y$ is the boundary of a flexible domain (since $\mu_{CZ}(\overline{\beta})\ge 1$ and SFT degrees of orbits are positive) and by action reasons when $Y=\partial(V\times \D)$.  Since $\mu_{CZ}(\delta_{\epsilon_0}(x))=1$ and $\phi_{Viterbo}$ is again an isomorphism, $x$ represents a closed cochain as $\overline{\alpha}$ does.  Finally, the bottom part of \eqref{eqn:diagram} implies that  $\sum a_i\#\overline{\cM}_{W,H}(\overline{\alpha_i}_{\#},\Lambda) \ne 0$.
\end{proof}

\subsubsection{The second case: $Y_1=\partial(V\times \D)$ for a Weinstein domain $V$ without $c_1$ torsion conditions}In this case, we will use an argument motivated by \cite[Lemma 5.5]{zbMATH06562001} regarding the lower bound of energy from the binding region. Our situation is simpler as we are working with the trivial open book $\partial(V\times \D)$. The contact form used in this paper, i.e.\ from \cite[\S 2,1]{zbMATH07673358}, is obtained by rounding the corner of 
$$\left(\partial(V\times \D) = V\times S^1\cup_{\partial V \times S^1} \partial V \times \D, \lambda_V+\frac{r^2}{2\pi} \rd \theta\right)$$
and then putting a perturbation on the page region $V\times S^1$. The key Reeb orbit $\alpha$ winds around the binding $\partial V \times \{0\} \subset \partial V\times \D$ once. 

Now on the binding region $\partial V \times \D$, the contact structure is given by
$$\xi_{\partial V }\oplus \la \partial_x+\frac{y}{2\pi}R_{\lambda_V},\partial_y-\frac{x}{2\pi}R_{\lambda_V} \ra,$$
where $\xi_{\partial V}\subset T\partial V$ is the contact structure $\ker \lambda_V$, $R_{\lambda_V}$ is the Reeb vector field of $(\partial V, \lambda_V)$, and $x,y$ are coordinates on $\D$. The Reeb vector field on the binding region is $R_{\lambda_V}$. We use the following almost complex structure on $\partial V \times \D \times \R_t$
$$J:\xi_{\partial V} \to \xi_{\partial V} \text{ compatible with } \rd \lambda_V, \quad J(\partial_x+\frac{y}{2\pi}R_{\lambda_V})=\partial_y-\frac{x}{2\pi}R_{\lambda_V}, \quad  J(R_{\lambda_V})=\partial_t.$$
We use $\pi_{\D}$ to denote the projection from $\partial V \times \D \times \R_t$ to $\D$. Then it is straightforward to check that $\pi_{\D}$ is $(J,i)$ holomorphic, where $i$ is the standard complex structure on $\D$. When the Hamiltonian only depends on the $t$-coordinate, the Hamiltonian vector field is parallel to $R_{\lambda_V}$. Then if $u$ solves the Floer equation in $\partial V \times \D \times \R_t$
$$\partial_su+J(\partial_tu-X_H)=0,$$
$\pi_{\D} u$ is a holomorphic map to $\D$. Moreover, we have 
\begin{equation}\label{eqn:area}
    \int u^*\rd\lambda \ge \int (\pi_{\D}u)^*\rd (\frac{r^2}{2\pi}\rd \theta)\ge 0,
\end{equation}
see \cite[the proof of Lemma 4.6]{bowden2022making} for details. 

Following \cite[\S 2.1]{zbMATH07673358}, the contact form on $V\times S^1 = \partial(V\times \D)\backslash \partial V \times \D $ is given by 
$\lambda_V +\frac{1}{2\pi f} \rd \theta$, and on $\partial V\times \D$, the contact form is given by $\lambda_V+\frac{r^2}{2\pi}\rd \theta$. Let $\gamma_p$ be the Reeb orbit corresponding to a critical point $p$; then the period of $\gamma_p$ is $\frac{1}{f(p)}$. Moreover, by \cite[Lemma 4.6]{bowden2022making}, we can apply the surgery outside of the $\partial V\times \D$ region and choose the function $f$ in the perturbation of the contact form such that the following holds:
\begin{equation}\label{eqn:condition}
    \frac{1}{f(p)}-1<\int \gamma_{h,i}^*\lambda
\end{equation}
whenever $p$ is not the minimum point of $f$ and for any $i$. We also require that $f$ is self-indexing in the sense that $f(p)\ge f(q)$ if and only if $\ind(p)\ge \ind (q)$ for critical points $p,q$. In the following, we use almost complex structures extending the above almost complex structure on $\partial V \times \D \times \R_t$.

\begin{proposition}\label{prop:area}
    With the setup above, for $Y=Y_1\#Y_2$ and $\overline{W}$ the connected sum cobordism as in \S \ref{sss:421}, we have the following:
    \begin{enumerate}
        \item\label{a1} $\overline{\cM}_{Y,H}(\overline{\gamma}_p,\overline{\gamma}_{h,i}^k)=\emptyset$, $\overline{\cM}_{\overline{W},H}(\overline{\gamma}_p,\overline{\gamma}_{h,i}^k)=\emptyset$ for any $k$, where $p$ is not the minimum point of $f$.
        \item\label{a2}  The Viterbo transfer map in \eqref{eqn:vit} maps $\overline{\gamma}_{h,i}^k$ to zero.
    \end{enumerate}
\end{proposition}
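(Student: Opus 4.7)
The plan is to adapt the binding-energy argument from \cite[Lemma 5.5]{zbMATH06562001}. The two key ingredients are already in place: the $(J,i)$-holomorphicity of the projection $\pi_\D$ on the binding region $\partial V \times \D \times \R_t$, and a linking calculation for the asymptotic orbits with respect to the binding $\partial V \times \{0\}$. Specifically, the page orbit $\gamma_p = \{p\} \times S^1$ has linking number $1$ (via a $2$-chain that runs across the page to $\partial V$ and closes off with a disk $\{q\} \times \D$ in the binding neighborhood), while $\gamma_h^k$, supported in the Darboux chart where the $1$-handle is attached, has linking number $0$.

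For (a1), suppose $u$ is a Floer cylinder in $\widehat{Y}$ (or in $\widehat{W}$) with $\overline{\gamma}_p$ at $+\infty$ and $\overline{\gamma}_h^k$ at $-\infty$. Restricting $u$ to the preimage of the binding region makes $\pi_\D \circ u$ a holomorphic map to $\D$ with boundary on $\partial\D$, and the linking calculation identifies its algebraic degree over $0 \in \D$ as $1$. By \eqref{eqn:area} and the normalization $\int_\D \rd(\tfrac{r^2}{2\pi}\rd\theta) = 1$, the binding region contributes at least $1$ to $\int u^* \rd\lambda$. The standard Stokes identity for autonomous Floer cylinders yields the total area $\int u^* \rd\lambda = \ell(\overline{\gamma}_h^k) - \ell(\overline{\gamma}_p) \approx k\int\gamma_h^*\lambda - 1/f(p)$. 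Controlling the contribution outside the binding region (using that $H$ vanishes on the filling and that the Hamiltonian on the cylindrical end is radial, so that the rest of the cylinder contributes non-negatively) and then invoking \eqref{eqn:condition} produces the contradiction; the same argument applies in $\widehat{W}$ since the binding region is preserved through the connected-sum cobordism.

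For (a2), the Viterbo transfer image of $\overline{\gamma}_h^k$ is computed by counts of rigid Floer trajectories in the completion $\widehat{\overline{W}}$ from $\overline{\gamma}_h^k$ on the convex side down to orbits on the concave side $Y_1 \sqcup Y_2$. Orbits on $Y_2$ are excluded by the large period lower bound on $Y_2$. For orbits $\overline{\gamma}_p$ on $Y_1$, the direction of the cylinder is reversed from (a1), so the algebraic degree of $\pi_\D u$ over $0 \in \D$ would be $0 - 1 = -1$, incompatible with $\pi_\D u$ being holomorphic. Hence no such trajectory exists and the transfer vanishes.

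The main technical challenge is making precise the identification of the algebraic degree of $\pi_\D u$ with the topological linking difference, which requires careful analysis of the winding of the boundary of $\pi_\D u$ around $\partial\D$ as $u$ exits the binding region across $\partial V \times \partial\D$, and of the asymptotic behavior at the ends of $u$. A secondary point is cleanly separating the binding region's $\rd\lambda$-energy contribution from the remainder of the cylinder so that the binding lower bound yields a useful constraint on the asymptotic actions. Both steps are controlled in \cite{zbMATH06562001} in an analogous open-book setting and should transfer here with only minor modifications to handle the $1$-handle region of $W$ and the rescaled Liouville forms in the Viterbo setup.
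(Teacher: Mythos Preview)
Your approach is essentially the paper's: for \eqref{a1} use the binding-area lower bound via the $(J,i)$-holomorphic projection $\pi_\D$ together with the linking numbers $\mathrm{lk}(\gamma_p,\partial V)=1$, $\mathrm{lk}(\gamma_h^k,\partial V)=0$, and for \eqref{a2} observe that a Viterbo cylinder from $\overline{\gamma}_h^k$ down to any $\overline{\gamma}_p$ on $Y_1$ would force a negative intersection number with the binding, contradicting holomorphicity of $\pi_\D u$.

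One correction: your Stokes identity has the wrong sign. For a Floer cylinder with $\overline{\gamma}_p$ at $+\infty$ and $\overline{\gamma}_h^k$ at $-\infty$ one has
\[
\int u^*\rd\lambda \;=\; \int \gamma_p^*\lambda - \int (\gamma_h^k)^*\lambda \;\approx\; \tfrac{1}{f(p)} - k\!\int\gamma_h^*\lambda,
\]
not the reverse. With the correct sign the binding bound gives $\tfrac{1}{f(p)} - k\int\gamma_h^*\lambda \ge 1$, hence $\tfrac{1}{f(p)}-1 \ge \int\gamma_h^*\lambda$, which is exactly what \eqref{eqn:condition} forbids. (With your sign the inequality does not contradict \eqref{eqn:condition}.) The step you call ``controlling the contribution outside the binding region'' is immediate here: since $H$ is autonomous and $X_H$ is everywhere a multiple of the Reeb field, $u^*\rd\lambda \ge 0$ pointwise, so the total $\rd\lambda$-area dominates the binding contribution without further work.
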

\begin{proof}
     Since the linking number of $\gamma_p$ around the binding $\partial V\times\{0\}$ is $1$ and the linking number of $\gamma_h^k$ around $\partial V\times \{0\}$ is zero, for any curve $u$ in $\overline{\cM}_{Y,H}(\overline{\gamma}_p,\overline{\gamma}_{h,i}^k,\Gamma)$, we must have 
     $$\int \gamma_p^*\lambda-\int(\gamma_{h,i}^k)^*\lambda\ge \int_{u^{-1}(\partial V \times \D \times \R_t)}u^*\rd \lambda\ge \int_{u^{-1}(\partial V \times \D \times \R_t)}(\pi_{\D}u)^*(\frac{r^2}{2\pi}\rd \theta)=1,$$
     where the second inequality is \eqref{eqn:area} and the last equality follows from the linking number. Hence we arrive at a contradiction with \eqref{eqn:condition}. The same applies to $\overline{W}$ as the connected sum was applied outside of the $\partial V\times \D$ region.

    The intersection number of the curve $u$ in the Viterbo transfer map from $\overline{\gamma}_{h,i}^k$ with $\partial V \times \{0\}$ is negative, as orbits on $Y_1$ have positive linking with $\partial V$. However, this contradicts that $\pi_{\D}u$ is holomorphic, which implies that the intersection number is non-negative.  
\end{proof}

\begin{remark}
    \eqref{eqn:condition} cannot be arranged for the minimum point $p$ of $f$, as $\overline{\cM}_{Y,H}(\overline{\gamma}_p,\overline{\gamma}_{h,1})$ is expected to be non-empty in view of the subcritical surgery formula for symplectic cohomology \cite{zbMATH01798901}. 
\end{remark}

\begin{proposition}\label{prop:corb'}
     \Cref{prop:corb} holds for \eqref{thm2} of \Cref{thm:main'}.
\end{proposition}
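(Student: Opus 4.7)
The plan is to mirror the argument of \Cref{prop:corb}, substituting the $c_1$-torsion-based index calculations with the linking-number arguments of \Cref{prop:area}. Start with the orbit $\overline{\alpha}$ on $Y_1$ from \S\ref{ss:411}, which satisfies $\mu_{CZ}(\overline{\alpha}) = 2$ and hence comes from a critical point $p$ of $f$ with $\ind(p) \in \{n-1, n\}$; in particular $p$ is not the minimum of $f$, since $n \geq 2$.

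The key observation is that, by \eqref{eqn:vit} together with \Cref{prop:area}(\ref{a2}), the Viterbo transfer map $\phi_{Viterbo}$ annihilates every handle orbit $\overline{\gamma}_h^k$ and acts as an upper-triangular isomorphism (with ones on the diagonal, with respect to the period filtration) on the subcomplex generated by non-handle orbits $\overline{\gamma}_{\#}$ for $\gamma$ a Reeb orbit on $Y_1$ of period $\le 2$. Hence I define $x = \sum a_i \overline{\alpha_i}_{\#}$ by inverting this triangular system at the chain level so that $\phi_{Viterbo}(x) = \overline{\alpha}$. By the upper-triangular form together with the self-indexing property of $f$, each $\overline{\alpha_i}_{\#}$ arises from a critical point $p_i$ with $\ind(p_i) \geq \ind(p) > 0$, so none of the $p_i$ is the minimum of $f$.

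For claim (\ref{c2}) and the first half of (\ref{c3}), I apply neck-stretching along $\{1-\eta\} \times Y \subset W_{\Lambda}$ and rule out $\Gamma \neq \emptyset$ by action reasons, exactly as in the $Y = \partial(V\times \D)$ case of \Cref{prop:corb}: the narrow action window of the relevant Hamiltonian orbits, combined with non-negativity of symplectic area and the fact that Reeb orbits on $Y_2$ have period $\gg 8$, leaves no budget for extra Reeb negative asymptotes. For the non-vanishing count in (\ref{c3}), the chain-level Viterbo transfer identifies $\sum a_i \#\overline{\cM}_{Y,H}(\overline{\alpha_i}_{\#}, \Lambda)$ with the corresponding count $\#\overline{\cM}_{Y_1,H}(\overline{\alpha}, \Lambda)$, which is nonzero by \Cref{prop:curve}(\ref{m3}); curves with handle-orbit negative asymptotes are eliminated by \Cref{prop:area}.

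Finally, for the cocycle condition (\ref{c1}), split the potential target $\overline{\beta}$ into handle and non-handle cases. If $\overline{\beta} = \overline{\gamma}_h^k$, then \Cref{prop:area}(\ref{a1}) gives $\overline{\cM}_{Y,H}(\overline{\alpha_i}_{\#}, \overline{\gamma}_h^k) = \emptyset$ for each $i$, since $p_i$ is not the minimum, so the count vanishes. If $\overline{\beta} = \overline{\delta}_{\#}$ with $\delta$ a Reeb orbit on $Y_1$, then applying the chain map $\phi_{Viterbo}$ to the differential of $x$ yields $\phi_{Viterbo}(\delta_{\epsilon_0}(x)) = \delta_{\epsilon_0}(\overline{\alpha}) = 0$ (the latter vanishes since $\overline{\alpha}$ is closed on $Y_1$ by \Cref{prop:curve}); combined with the previous case showing that $\delta_{\epsilon_0}(x)$ has no handle-orbit component, the injectivity of $\phi_{Viterbo}$ on the non-handle subcomplex forces $\delta_{\epsilon_0}(x) = 0$. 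The main obstacle I anticipate is making the chain-level interpretation of $\phi_{Viterbo}$ rigorous, as opposed to its homological form in \eqref{eqn:diagram}: the upper-triangular structure of \eqref{eqn:vit} and the chain-map property of the Viterbo transfer need to hold strictly at chain level, which demands either a careful cascade model of the continuation map or a direct area/index analysis of the relevant Floer cylinders.
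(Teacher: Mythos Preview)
Your strategy is close to the paper's, but you miss the decisive simplification and, as a consequence, leave a gap in (\ref{c2}).

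The observation you overlook is that $\alpha$ has \emph{minimal} period among all non-handle Reeb orbits. Since $V$ is Weinstein of dimension $2n-2$, the orbit $\overline{\alpha}=\check{\gamma}_p$ with $\mu_{CZ}=2$ forces $\ind(p)=n-1$, which is the \emph{maximal} possible index; by self-indexing this gives maximal $f(p)$ and hence minimal period $1/f(p)$. Thus there are no smaller-period correction terms in \eqref{eqn:vit}, and together with \Cref{prop:area}\eqref{a2} one gets $\phi_{Viterbo}(\overline{\alpha}_{\#})=\overline{\alpha}$ on the nose, so $x=\overline{\alpha}_{\#}$ is a single orbit. Your triangular inversion is correct but collapses trivially. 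With minimality in hand, (\ref{c1}), (\ref{c2}) and the first half of (\ref{c3}) are handled uniformly: any curve from $\overline{\alpha}_{\#}$ whose target $\overline{\beta}$ or whose puncture set $\Gamma$ contains a non-handle orbit is ruled out by the action inequality $T_\alpha\ge T_\beta+\sum T_\gamma$, leaving only the case where $\beta$ and every element of $\Gamma$ is a handle orbit $\gamma_h^k$.

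That remaining case is exactly where your argument for (\ref{c2}) is incomplete. You appeal only to ``action reasons'', but \eqref{eqn:condition} does not bound $T_{\gamma_h}$ away from zero in any useful way, so action alone cannot exclude handle-orbit punctures. The paper closes this gap by invoking the linking/area estimate of \Cref{prop:area}, whose proof extends verbatim to curves with additional handle-orbit negative punctures (the intersection number with the binding is still $1$, so the $\rd\lambda$-area in the binding region is still at least $1$, contradicting \eqref{eqn:condition}). You already cite \Cref{prop:area} for the second half of (\ref{c3}) and for (\ref{c1}); the fix is simply to cite it for (\ref{c2}) as well. Finally, your neck-stretching reference to $\{1-\eta\}\times Y\subset W_{\Lambda}$ is misplaced: the moduli spaces in \Cref{prop:corb} live in the symplectization of $Y$ (and are compared to curves in the connected-sum cobordism $W$), not in the surgery cobordism $W_{\Lambda}$.
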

\begin{proof}
    Let $\overline{\alpha}$ be the orbit in \Cref{prop:curve} for $Y_1=\partial(V\times \D)$; then $\alpha$ has the minimal period by the self-indexing property. Since $\int u^*\rd \overline{\lambda}\ge 0$ for curves in $\overline{W}$, \eqref{eqn:diagram} still holds for the trivial augmentation, as the differentials and continuation maps do not have action room for augmentations.  Then $\phi_{Viterbo}^{-1}(\overline{\alpha})=\overline{\alpha}_\#$ by \eqref{a2} of \Cref{prop:area} and \eqref{eqn:vit}. The fact that $\overline{\cM}_{Y,H}(\overline{\alpha}_\#,\overline{\beta},\Gamma)=\emptyset$ follows from the minimal period of $\alpha$ on $\partial(V\times \D)$ and \Cref{prop:area}, where $\Gamma$ could be the empty set. $\overline{\cM}_{Y,H}(\overline{\alpha}_\#,\Lambda ,\Gamma)=\emptyset$ for $\Gamma \ne \emptyset$ follows from the same reason. We have $\# \overline{\cM}_{\overline{W},H}(\overline{\alpha}_\#,\Lambda)\ne 0$ from \eqref{eqn:diagram}. In the neck-stretching along $Y$, a similar argument to \Cref{prop:area} rules out the possibility of developing negative punctures asymptotic to $\gamma_{h,i}^k$. Moreover, punctures asymptotic to other Reeb orbits are ruled out because $\alpha$ has the minimal period on $\partial(V\times \D)$. Thus, we have $\# \overline{\cM}_{Y,H}(\overline{\alpha}_\#,\Lambda)\ne 0$. 
\end{proof}

\begin{proof}[Proof of \Cref{thm:main'}]
	It follows from the same argument as \Cref{prop:corbordism} using \Cref{prop:corb,prop:corb'}.
\end{proof}	

\subsection{Functorial explanation}\label{ss:43}
The proof of \Cref{thm:main} has a functorial explanation as follows. Let $H$ be the Hamiltonian on $\widehat{Y}$ which is zero on $(0,1)_r\times Y$ and has slope $a$ on $(1,\infty)_r\times Y$. Then the counting of $\overline{\cM}_{Y,H}(x,y,\Gamma)$ makes $C^{-*}_+(H)\otimes \CC_*(Y)$ into a $\CC_*(Y)$-DGA module, where the differential on $C^{-*}_+(H)\otimes \CC_*(Y)$ is given by 
$$\delta(x\otimes w) = (-1)^{|x|}x\otimes \partial_{\CH}(w)+ \sum_{[\gamma],y}\frac{1}{\mu_{\Gamma}\kappa_{\Gamma}} \# \overline{\cM}_{Y,H}(x,y,\Gamma) y\otimes q^{\Gamma}\cdot w$$
for $x\in C_+^{-*}(H)$ and $w\in \CC_*(Y)$. Then by counting $\overline{\cM}_{Y,H}(x,C,\Gamma)$, we get a $\CC_*(Y)$-DGA module map $C^{-*}_+(H)\otimes\CC_*(Y)\to C^{-*+1}(Y)\otimes \CC_*(Y)$, where $C^{-*+1}(Y)\otimes \CC_*(Y)$ is the trivial $\CC_*(Y)$ DGA module and $C^*(Y)$ is the (Morse) cochain complex of $Y$. Symplectic cohomology with respect to an augmentation $\epsilon$ in \S\ref{ss:aug} is the tensor product $(C_+^{-*}(H)\otimes \CC_*(Y))\otimes_{\CC_*(Y)} \Q$, where $\Q$ is considered as a $\CC_*(Y)$ module using the augmentation $\epsilon$.

Now given an exact cobordism $W$ from $Y_-$ to $Y_+$, by counting $\overline{\cM}_{W,H}(x,C,\Gamma)$ similar to \eqref{eqn:WH}, we get a $\CC_*(Y_+)$-DGA module map $C^{-*}_+(H)\otimes\CC_*(Y_+)\to C^{-*+1}(W)\otimes \CC_*(Y_-)$, where $\CC_*(Y_-)$ is viewed as a $\CC_*(Y_+)$-DGA module by the DGA morphism $\Phi_{W}:\CC_*(Y_+)\to \CC_*(Y_-)$ from the cobordism $W$. Then we have the following diagram of $\CC_*(Y_+)$-DGA module maps, which on homology is commutative by a similar argument to \cite[Proposition 3.2]{zbMATH07367119}. 
$$
\xymatrix{
C_+^{-*}(H)\otimes \CC_*(Y_+)\ar[d] \ar[r] & C^{-*+1}(Y_+)\otimes \CC_*(Y_+)\ar[d]\\
C^{-*+1}(W) \otimes \CC_*(Y_-)\ar[r] & C^{-*+1}(Y_+)\otimes \CC_*(Y_-)}
$$
Then \Cref{prop:curve,prop:corb,prop:corb'} imply that $\overline{\alpha}\otimes 1$ is closed in $C_+^{-*}(H)\otimes \CC_*(Y_+)$ and is mapped to $\delta_{\partial}(\overline{\alpha})\otimes 1\in H_*(C^{-*+1}(Y_+)\otimes \CC_*(Y_-))$. If $H^*(W;\Q)\to H^*(Y_+;\Q)$ does not hit $\delta_{\partial}(\overline{\alpha})$, then we must have $1=0\in H_*(\CC_*(Y_-))$. This is the case for the surgery cobordism in \Cref{thm:main,thm:main'}.

\subsection{Infinite non-loose Legendrians}\label{ss:44}
By the work of Lazarev \cite{zbMATH07305775}, any Weinstein structure on $\C^n$ for $n\ge 3$ can be obtained by attaching a critical handle to a Legendrian sphere in $(\partial(DT^*S^{n-1}\times \D),\xi_{\std})\simeq S^{n-1}\times S^n$, where the embedding of the Legendrian is in the same homotopy class as $S^{n-1}\stackrel{\simeq}{\to} S^{n-1}\times \{p\}\subset S^{n-1}\times S^n$. When the Weinstein structure is standard, the attaching Legendrian is the loose one $\Lambda_{loose}$, i.e.\ the Legendrian lift of the Lagrangian zero section in $DT^*S^{n-1}$. 

A formal Legendrian submanifold of a contact manifold $(Y^{2n-1},\xi)$ consists of an embedding of an $n-1$ dimensional manifold $f:\Lambda\subset Y$ and a family of injective bundle maps $F_s:T\Lambda \to  f^*TY $ for $s\in [0,1]$, such that $F_0=\rd f$ and the image of $F_1$ is a Lagrangian subspace of $f^*\xi$ with respect to the natural conformal symplectic structure, or equivalently (up to homotopy) a totally real subspace for a fixed almost complex structure on $\xi$ compatible with the conformal symplectic structure. Formal Legendrian isotopy is an isotopy of those data. Given two Legendrian embeddings $f_0,f_1:\Lambda \to Y$, if they are smoothly isotopic via $f_s$, then the Legendrian embedding $f_1$ is formally Legendrian isotopic to $(f_0,F_s)$, where $F_s = P_s\rd f_s$ and $P_s$ is the parallel transport from $f_s(p)$ back to $f_0(p)$ using an auxiliary complex connection on $\xi$. Therefore $F_1$ and $\rd f_0$ differ by a section $s$ of the bundle $U(f_0^*\xi)$ of unitary automorphisms. If this section is homotopic to the identity section, then $(f_0,F_s)$, and hence $(f_1,\rd f_1)$, is formally Legendrian isotopic to $(f_0,\rd f_0)$. Now we can consider the situation of Legendrian spheres $f_0,f_1:S^{n-1}\to Y$; then $f_0^*\xi$ is isomorphic to $i^*TT^*S^{n-1}$ for the zero section $i:S^{n-1}\to T^*S^{n-1}$, i.e.\ the complexification $T_{\C}S^{n-1}$ of $TS^{n-1}$. Note that $T_{\C}S^{n-1}\oplus \underline{\C}\simeq \underline{\C}^{n}$, we get an inclusion of $U(T_{\C}S^{n-1})$ into a trivial $U(n)$ bundle. Since $U(n-1)\subset U(n)$ is isomorphic on the $2n-2$ skeleton, whether $s$ is homotopic to the identity in $U(T_{\C}S^{n-1})$ is the same as whether $s$ is homotopic to the identity in the trivial $U(n)$ bundle whenever $n-1< 2n-2$, i.e.\ $n>1$. Finally, such a section is always homotopic to the identity by Bott periodicity as $\pi_{n-1}(U(n))=0$ for $n>1$ odd.

\begin{proposition}
    Let $W_1,W_2$ be two different Weinstein structures on $\C^n$ for $n\ge 3$ that are different from the standard one. Then the corresponding attaching Legendrian spheres are non-loose, not Legendrian isotopic, smoothly isotopic, and formally Legendrian isotopic when $n$ is odd.
\end{proposition}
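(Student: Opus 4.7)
The plan is to verify the four assertions independently, each drawing on a standard piece of machinery from symplectic or differential topology. The four items are essentially independent of one another, so I would address them in sequence.

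For the \emph{non-looseness}, I would argue by contradiction: if the attaching Legendrian $\Lambda_i$ for $W_i$ were loose, then attaching a critical Weinstein handle along $\Lambda_i$ to $T^*S^{n-1}\times \D$ would produce a flexible Weinstein manifold by the Cieliebak--Eliashberg flexible handle attachment theorem. The uniqueness of flexible Weinstein structures on a fixed smooth manifold (with matching formal Weinstein data) would then identify this structure with the standard Weinstein $\C^n$, contradicting the hypothesis that $W_i$ is non-standard. For the \emph{distinctness in the Legendrian isotopy category}, I would simply invoke the fact that Weinstein handle attachment is invariant under Legendrian isotopy of the attaching sphere: any Legendrian isotopy between $\Lambda_1$ and $\Lambda_2$ would descend to a Weinstein deformation equivalence $W_1\simeq W_2$, contradicting $W_1\ne W_2$.

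For \emph{smooth isotopy}, by Lazarev's presentation both $\Lambda_1$ and $\Lambda_2$ lie in the smooth homotopy class of the standard inclusion $S^{n-1}\hookrightarrow S^{n-1}\times S^n$, and Haefliger's embedding theorem then ensures that two smoothly homotopic embeddings $S^{n-1}\hookrightarrow Y^{2n-1}$ are smoothly isotopic provided $2(2n-1)\ge 3(n-1)+3$, which holds for all $n\ge 3$. The \emph{formal Legendrian isotopy} in odd $n$ is exactly the bundle-theoretic computation sketched in the paragraph preceding the statement: smoothly isotopic Legendrian embeddings of $S^{n-1}$ differ by a section of the unitary bundle $U(T_{\mathbb C}S^{n-1})$, and since $U(n-1)\hookrightarrow U(n)$ is an isomorphism on the $(2n-2)$-skeleton and $\pi_{n-1}(U(n))=0$ for odd $n>1$ by Bott periodicity, this section is null-homotopic, so the formal Legendrian isotopy follows.

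The main obstacle, as I see it, is the first step. The non-looseness argument requires carefully matching the formal Weinstein data (almost complex structure, Morse-theoretic handle decomposition) between the standard $\C^n$ and the candidate flexible $W_i$ in order to legitimately invoke Cieliebak--Eliashberg's uniqueness theorem for flexible Weinstein structures on a given smooth manifold. Once this uniqueness statement is applied, the remaining three items are essentially formal consequences of Lazarev's presentation, the naturality of Weinstein handle attachment, Haefliger's h-principle, and the Bott-periodicity argument already given in the paragraph preceding the proposition.
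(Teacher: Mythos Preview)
Your proposal is correct and follows essentially the same four-step decomposition as the paper's proof. The only cosmetic difference is in the smooth isotopy step: the paper invokes ``the smooth Whitney trick'' directly, whereas you phrase it via Haefliger's embedding theorem---but Haefliger's result in the metastable range is precisely the Whitney trick packaged as a theorem, so this is the same argument. Your worry about matching formal Weinstein data in the non-looseness step is unwarranted: since $\C^n$ is contractible, the space of almost complex structures (and hence of formal Weinstein data compatible with the given smooth structure) is contractible, so there is nothing to check and Cieliebak--Eliashberg's uniqueness applies immediately.
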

\begin{proof}
    If the attaching Legendrian is loose, then the resulting Weinstein structure must be the standard Weinstein structure by \cite{zbMATH06054083}. As they are different Weinstein structures, the attaching Legendrian cannot be Legendrian isotopic. Finally, applying the smooth Whitney trick, one sees that the attaching Legendrian is smoothly isotopic to $\Lambda_{loose}$. And by the discussion above, it is formally Legendrian isotopic to $\Lambda_{loose}$ if $n$ is odd. 
\end{proof}

In view of the proposition above, \Cref{prop:exotic} follows from the existence of infinitely many exotic Weinstein structures on $\C^n$ for $n\ge 3$. Such structures were first found by Seidel and Smith \cite{zbMATH02242665} for $n$ even; then McLean found infinitely many for $n$ even \cite{zbMATH05553983}. The most flexible and efficient way of constructing infinitely many such exotic structures for any $n\ge 3$ is due to Abouzaid and Seidel \cite{abouzaid2010altering}. Their exoticity and differences are both illustrated using symplectic cohomology.
\bibliographystyle{plain} 
\bibliography{ref}
\Addresses

\end{document}